\newcommand{\C}{\mathbb{C}}
\newcommand{\D}{\mathbb{D}}
\newcommand{\N}{\mathbb{N}}
\newcommand{\dinf}{d_{\operatorname{inf}}}
\newcommand{\Area}{\operatorname{Area}}
\newtheorem{theorem}{Theorem}[section]
\newtheorem{lemma}[theorem]{Lemma}
\theoremstyle{definition}
\newtheorem{definition}[theorem]{Definition}
\theoremstyle{theorem}
\theoremstyle{theorem}
\theoremstyle{theorem}
\theoremstyle{theorem}
\theoremstyle{definition}
\theoremstyle{theorem}
\numberwithin{equation}{section}
\begin{document}
\title{Carath\'eodory's Theorem and moduli of local connectivity}
\author{Timothy H. McNicholl}
\address{Department of Mathematics\\
Iowa State University\\
Ames, Iowa 50011}
\email{mcnichol@iastate.edu}

\begin{abstract}
We give a quantitative proof of the Carath\'eodory Theorem by means of the concept of a modulus of local connectivity and the extremal distance of the separating curves of an annulus.
\end{abstract}
\subjclass{30}
\keywords{Complex analysis, conformal mapping}
\maketitle

\section{Introduction}

The goal of this paper is to give a new proof of the Carath\'eodory Theorem which states that if $D$ is a Jordan domain, and if $\phi$ is a conformal map of $D$ onto the unit disk, then $\phi$ extends to a homeomorphism of $\overline{D}$ with the closed unit disk (see e.g. \cite{Golusin.1969}, \cite{Greene.Krantz.2002}, and \cite{Palka.1991}).  
This proof has a feature which appears to be new in that for each $\zeta \in \partial D$ it explicitly constructs a $\delta$ for each $\epsilon$ when proving the existence of $\lim_{z \rightarrow \zeta} \phi(z)$.  Furthermore, a closed form expression for $\delta$ in terms of $\epsilon$ and $\zeta$ is obtained.  Such expressions are potentially useful when estimating error in numerical computations.
This is accomplished by means of a \emph{modulus of local connectivity} for the boundary of $D$.  Roughly speaking, this is a function that predicts how close two boundary points must be in order to connect them with a small arc that is included in the boundary.  
As in \cite{Palka.1991}, the proof uses the extremal distance of the separating curves of an annulus to bound $|\phi(z) - \phi(\zeta)|$. 

The paper is organized as follows.  Section \ref{sec:BACKGROUND} covers background material.  Section \ref{sec:OUTLINE} states the main ideas of the proof. Sections \ref{sec:DISK-ARC} and \ref{sec:POLAR} deal with topological preliminaries.  Our estimates are proven in Section \ref{sec:ESTIMATES} and Section \ref{sec:CT} completes the proof.  

\section{Background}\label{sec:BACKGROUND}

Let $\N$ denote the set of non-negative integers.

When $\mathcal{A}$ is an annulus with inner radius $r$ and outer radius $R$, let 
\[
\lambda(\mathcal{A}) = \frac{2\pi}{\log(R/r)}.
\]  
$\lambda(\mathcal{A})$ is the extremal length of the family of separating curves of $\mathcal{A}$; see e.g. \cite{Garnett.Marshall.2005}.  Note that $\lambda(\mathcal{A})$ decreases as the annulus $\mathcal{A}$ gets thicker (i.e. as the ratio $R/r$ increases) and increases as $\mathcal{A}$ gets thinner (i.e. as the ratio $R/r$ decreases).

When $X$, $Y$, and $Z$ are subsets of the plane, we say that $X$ \emph{separates} $Y$ from $Z$ if $Y$ and $Z$ are included in distinct connected components of $\C - X$.  In the case where $Y = \{p\}$, we say that $X$ separates $p$ from $Z$.  In the case where $Y = \{p\}$ and $Z = \{q\}$ we say that $X$ separates $p$ from $q$. 

A topological space is \emph{locally} connected if it has a basis of open connected sets.  By the Hahn-Mazurkiewicz Theorem, every curve is locally connected; see e.g. Section 3-5 of \cite{Hocking.Young.1988}.  Suppose $X$ is a compact and connected metric space.  Then, $X$ is locally connected if and only if it is \emph{uniformly locally arcwise connected}.  This means that for every $\epsilon > 0$, there is a $\delta > 0$ so that whenever $p,q \in X$ and $0 < d(p,q) < \delta$, $X$ includes an arc from $p$ to $q$ whose diameter is smaller than $\epsilon$ (although its length may be infinite); again, see Section 3-5 of \cite{Hocking.Young.1988}.  Accordingly, we define a \emph{modulus of local connectivity} for a metric space $X$ to be a function $f : \N \rightarrow \N$  so that whenever $p,q \in X$ and $0 < d(p,q) \leq 2^{-f(k)}$, $X$ includes an arc from $p$ to $q$ whose diameter is smaller than $2^{-k}$.  Thus, a metric space is uniformly locally arcwise connected if and only if it has a modulus of local connectivity, and a metric space that is compact and connected is locally connected if and only if it has a modulus of local connectivity.  Note that if $f$ is a modulus of local connectivity, then $\lim_{k \rightarrow \infty} f(k) = \infty$.   In addition, if a metric space has a modulus of local connectivity, then it has a modulus of local connectivity that is increasing.    

Moduli of local connectivity originated in the adaptation of local connectivity properties to the setting of theoretical computer science in \cite{Couch.Daniel.McNicholl.2012} and \cite{Daniel.McNicholl.2012}.   Computational connections between moduli of local connectivity and boundary extensions of conformal maps are made in \cite{McNicholl.2014}.  Here, we show that this notion may be useful in more traditional mathematical settings.

\section{Outline of the proof}\label{sec:OUTLINE}

We first observe the following which is proven in Section \ref{sec:DISK-ARC}.

\begin{theorem}\label{thm:BOUNDARY.COMP}
If $\zeta_0$ is a boundary point of a simply connected Jordan domain $D$, then for every $r > 0$, $\zeta_0$ is a boundary point of exactly one connected component of $D_r(\zeta_0) \cap D$.
\end{theorem}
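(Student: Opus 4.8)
The plan is to reduce the statement to the behavior of the single crosscut of the round disk $B := D_r(\zeta_0)$ that $\partial D$ cuts out of it.

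First I would dispose of the trivial case $D \subseteq B$: then $B \cap D = D$ is connected and $\zeta_0 \in \partial D = \overline{D} \setminus D$, so the conclusion is immediate. Assume henceforth $D \not\subseteq B$; this forces both $\partial D$ and $D$ to contain points outside $\overline{B}$. Writing $\partial D$ as the image of a homeomorphism from a circle, let $\alpha$ be the connected component of $\partial D \cap B$ containing $\zeta_0$ (note $\zeta_0 \in B$, being the center). Then $\alpha$ is an open subarc of $\partial D$ with $\zeta_0$ in its relative interior, and its closure $\overline{\alpha}$ is a crosscut of $B$: a simple arc whose two distinct endpoints lie on $\partial B$, whose relative interior $\alpha$ lies in $B$, and with $\zeta_0 \in \alpha$. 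A crosscut of a disk separates it into exactly two Jordan subdomains (I expect this to be among the preliminaries established in Section~\ref{sec:DISK-ARC}); call them $B^{+}$ and $B^{-}$, so that $\zeta_0 \in \overline{\alpha} \subseteq \overline{B^{+}} \cap \overline{B^{-}}$. Since $\overline{\alpha} \subseteq \partial D$ is disjoint from the open set $D$, this yields a partition $B \cap D = (B^{+} \cap D) \sqcup (B^{-} \cap D)$ into disjoint open sets, and hence every connected component of $B \cap D$ lies entirely in $B^{+}$ or entirely in $B^{-}$.

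For the existence half I would invoke the classical fact --- compatible with the local-connectivity results recalled in Section~\ref{sec:BACKGROUND}, and also derivable from the Schoenflies theorem --- that every boundary point of a Jordan domain is accessible from the domain. Let $\gamma$ be an arc with endpoint $\zeta_0$, otherwise contained in $D$, and arranged to start at an interior point outside $\overline{B}$. Let $t_0$ be the last parameter at which $\gamma$ meets $\partial B$; the portion of $\gamma$ after $t_0$ lies, apart from the endpoint $\zeta_0$, in $B \cap D$, is connected, and has $\zeta_0$ in its closure, so it lies in a single component $C^{*}$ of $B \cap D$ --- a component with $\zeta_0$ in its boundary.

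Uniqueness is the heart of the argument, and the step I expect to be the main obstacle. The first move is localization: the remaining strands $(\partial D \cap B) \setminus \alpha$ have closure disjoint from $\zeta_0$, since under the homeomorphism onto $\partial D$ they correspond to parameter values bounded away from the one representing $\zeta_0$; hence $\partial D \cap D_\eta(\zeta_0) = \alpha \cap D_\eta(\zeta_0)$ for every sufficiently small $\eta \in (0,r)$. In particular $D \cap D_\eta(\zeta_0)$ lies on one side of $\overline{\alpha}$, say $D \cap D_\eta(\zeta_0) \subseteq B^{+}$, so every component of $B \cap D$ with $\zeta_0$ in its closure lies in $B^{+} \cap D$, and it remains only to exclude two of them. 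Here I would complete a small subarc $\alpha'$ of $\alpha$ through $\zeta_0$ to a Jordan curve $\Gamma = \alpha' \cup \mu$, with $\mu$ a crosscut of the exterior domain $\mathbb{C} \setminus \overline{D}$ joining the endpoints of $\alpha'$ (accessibility again, now from the exterior), chosen --- together with the complementary strands above --- to stay uniformly away from $\zeta_0$. Then $\Gamma$ agrees with $\partial D$ throughout a neighborhood of $\zeta_0$, and a short argument exploiting that $\mu$ lies outside $\overline{D}$ identifies $D \cap D_\eta(\zeta_0)$ with the exterior of $\Gamma$ meeting $D_\eta(\zeta_0)$; one is thereby reduced to the two-sidedness, near a given boundary point, of a single Jordan curve. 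The genuine difficulty --- what makes this more than bookkeeping --- is that $\partial D$, hence $\Gamma$, may oscillate in and out of every round disk centered at $\zeta_0$, so one cannot assume $\partial D \cap D_\eta(\zeta_0)$ to be connected for any $\eta$; I would settle this residual point by contradiction, turning two hypothetical good components into two disjoint arcs in $D$ running from near $\zeta_0$ out to a fixed circle about $\zeta_0$ and playing them against the simple connectedness of $D$ together with the crosscut lemma. The tools I expect to use throughout are the crosscut lemma of Section~\ref{sec:DISK-ARC}, the Jordan Curve Theorem, and the accessibility of boundary points of Jordan domains (and of their exteriors).
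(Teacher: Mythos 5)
Your setup agrees with the paper's: you isolate the component $\alpha$ of $\partial D\cap D_r(\zeta_0)$ through $\zeta_0$, observe that $\overline{\alpha}$ is a crosscut of the disk $B=D_r(\zeta_0)$ splitting it into two sides, and reduce to showing that each side carries at most one component of $B\cap D$ with $\zeta_0$ on its boundary. (Your existence argument via accessibility of boundary points is a legitimate alternative to the paper's route through Theorem \ref{thm:MLC}, though it imports Schoenflies-level machinery that the paper develops internally.) The problem is that the step you yourself flag as ``the genuine difficulty'' is never actually carried out. The paper's entire Section \ref{sec:DISK-ARC} exists to supply this step: Theorem \ref{thm:COMP.JORDAN} shows that a component $C$ of $B-\sigma$ is the interior of a Jordan curve $E\cup F$ in which $F$ is \emph{uniquely determined} by $E$ and by which side of $\overline{E}$ contains $C$ (via the partial order $\prec$ on the components of $\sigma\cap E^-$ and the replacement of each arc $\lambda_G$ by $\overline{G}$ for $G$ maximal). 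That is what forces ``one component per side,'' and its proof occupies the four claims about $J$. Your substitute --- ``two disjoint arcs in $D$ running from near $\zeta_0$ out to a fixed circle, played against simple connectedness and the crosscut lemma'' --- is a plan, not an argument: two disjoint arcs in a simply connected domain are not by themselves contradictory, and to close them into a Jordan curve you must join their inner endpoints by an arc that necessarily leaves $D$ near $\zeta_0$, after which you still have to produce a point of $\partial D$ separated by the resulting curve from $\infty$. Making that work is essentially reproving Theorem \ref{thm:COMP.JORDAN}.

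A second, more local gap: from $\partial D\cap D_\eta(\zeta_0)=\alpha\cap D_\eta(\zeta_0)$ you conclude that $D\cap D_\eta(\zeta_0)$ lies on one side of $\overline{\alpha}$. This does not follow. A component $W$ of $D_\eta(\zeta_0)\setminus\overline{\alpha}$ contained in $B^-$ and meeting no point of $\partial D$ could perfectly well lie in $D$; to rule out infinitely many such components accumulating at $\zeta_0$ you again need structural information about the components of $B\setminus\partial D$, not just the localization of $\partial D$ itself. (For the theorem as stated this detour is also unnecessary: the paper allows $\zeta_0$ to bound one component on each side of $\overline{E}$ and finishes by noting that exactly one of the two lies in $D$ rather than in the exterior.) In short, the skeleton is right and matches the paper, but the load-bearing uniqueness lemma is missing.
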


Suppose $\zeta_0$ is a boundary point of a simply connected Jordan domain $D$.   In light of Theorem \ref{thm:BOUNDARY.COMP}, when $r >0$ we let $C(D; \zeta_0, r)$ denote the connected component of $D_r(\zeta_0) \cap D$ whose boundary contains $\zeta_0$.  
Suppose $\phi$ is a conformal map of $D$ onto the unit disk.  The fundamental strategy of the proof is to bound the diameter of $\phi[C(D; \zeta_0, r)]$.  To do so, we first construct an upper bound on the diameter of $\phi[C]$ where $C$ is a connected component of $D_r(\zeta) \cap D$ for some point $\zeta$ in the complement of $D$.  Namely, in Section \ref{sec:ESTIMATES} we prove the following.

\begin{theorem}\label{thm:MODULUS.ESTIMATE}
Let $\phi$ be a conformal map of a 
domain $D$ onto the unit disk.  
Suppose $\mathcal{A}$ is an annulus so that $\overline{\mathcal{A}}$ separates 
its center from $\phi^{-1}[\overline{D_r(0)}]$ where $r \geq \sqrt{\pi\lambda(\mathcal{A})}$.      
Let 
$C$ be a connected component of the points of $D$ that are inside the inner circle of $\mathcal{A}$.  Suppose $l = 1 - \sqrt{r^2 - \pi \lambda(\mathcal{A})}$.
Then, the diameter of $\phi[C]$ is at most $\displaystyle{\sqrt{l^2 + 4 \pi \lambda(\mathcal{A}) }}$.
\end{theorem}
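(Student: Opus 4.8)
The plan is to combine a classical length--area estimate with an arc-separation argument, working entirely inside the annular region. Let $p$, $\rho_1$, $\rho_2$ be the center and the inner and outer radii of $\mathcal{A}$, so $\lambda(\mathcal A)=2\pi/\log(\rho_2/\rho_1)$; abbreviate $\mu=\pi\lambda(\mathcal A)$. (Note $r<1$: otherwise $\phi^{-1}[\overline{D_r(0)}]=D$, and $\overline{\mathcal A}$ could not separate its center from all of $D$ while the component $C$ exists.) For $\rho_1<t<\rho_2$ let $L(t)$ be the length of $\phi\bigl[\{\,z:|z-p|=t\,\}\cap D\bigr]$. Parametrizing the component arcs of $\{|z-p|=t\}\cap D$ by angle and applying the Cauchy--Schwarz inequality gives $L(t)^2/t\le 2\pi\int|\phi'|^2 t\,d\theta$, and integrating over $\rho_1<t<\rho_2$ yields, since $\phi$ is injective and $\phi[\mathcal A\cap D]\subseteq\D$,
\[
\int_{\rho_1}^{\rho_2} L(t)^2\,\frac{dt}{t}\ \le\ 2\pi\,\Area\bigl(\phi[\mathcal A\cap D]\bigr)\ \le\ 2\pi^2 .
\]
As $\int_{\rho_1}^{\rho_2}dt/t=\log(\rho_2/\rho_1)=2\pi/\lambda(\mathcal A)$, the inequality $L(t)^2\le\mu$ holds for a set of $t$ of positive measure, so we may fix $t_0\in(\rho_1,\rho_2)$ with $L(t_0)<\infty$ and $L(t_0)^2\le\mu=\pi\lambda(\mathcal A)$.

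Next I locate the relevant crosscut. Since $C$ lies inside the inner circle it lies in the component of $\C\setminus\overline{\mathcal A}$ containing $p$, whereas by hypothesis $\phi^{-1}[\overline{D_r(0)}]$ lies in a different component; consequently every path in $D$ from $C$ to $\phi^{-1}[\overline{D_r(0)}]$ crosses the circle $\{|z-p|=t_0\}$. Using the topological results of Sections \ref{sec:DISK-ARC} and \ref{sec:POLAR} one extracts a single sub-arc $\gamma$ of $\{|z-p|=t_0\}\cap D$ that separates $C$ from $\phi^{-1}[\overline{D_r(0)}]$ in $D$ and is either a crosscut of $D$ or, when the whole circle lies in $D$, a Jordan curve in $D$. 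Because $\gamma\subseteq\mathcal A$ it is disjoint from $\phi^{-1}[\overline{D_r(0)}]$, and $\mathrm{length}\,\phi[\gamma]\le L(t_0)\le\sqrt\mu$. Since $\phi$ is proper and $\phi[\gamma]$ is rectifiable, the closure $\sigma$ of $\phi[\gamma]$ in $\overline\D$ is obtained by adjoining its two endpoints, which lie on $\partial\D$; thus $\sigma$ is a Jordan curve in $\D$, a crosscut of $\D$, or a loop touching $\partial\D$ in one point, and $\diam\sigma\le\sqrt\mu$ in every case. Applying $\phi$ to the separation, $\phi[C]$ lies in the complementary component $\Delta$ of $\D\setminus\sigma$ disjoint from $\overline{D_r(0)}$; hence $\Delta\subseteq\D\setminus\overline{D_r(0)}=\{\,r<|z|<1\,\}$.

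It remains to bound $\diam\Delta\ (\ge\diam\phi[C])$. If $\sigma$ is a Jordan curve or a one-point loop, then $\Delta$ is a Jordan domain with $\diam\Delta=\diam\partial\Delta\le\diam\sigma\le\sqrt\mu$, which is below the claimed bound. If $\sigma$ is a crosscut with endpoints $a\ne b$, then $\partial\Delta$ consists of $\sigma$ together with one of the two arcs of $\partial\D$ determined by $a,b$. On $\sigma$ we have $|z|>r\ge\sqrt\mu\ge\mathrm{length}\,\sigma$, so the angular variation of $\sigma$ about $0$ is at most $1$; a winding-number count — using that $\overline{D_r(0)}$, and hence $0$, lies in the other component of $\D\setminus\sigma$ — then forces the arc $\alpha$ bounding $\Delta$ to be the minor arc, whence $\diam\alpha=|a-b|\le\mathrm{length}\,\sigma\le\sqrt\mu$. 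Therefore
\[
\diam\phi[C]\ \le\ \diam\Delta\ =\ \diam(\sigma\cup\alpha)\ \le\ \diam\sigma+\diam\alpha\ \le\ 2\sqrt{\mu}\ =\ \sqrt{4\pi\lambda(\mathcal A)}\ \le\ \sqrt{l^2+4\pi\lambda(\mathcal A)} .
\]
(Keeping track of the radial confinement $r<|z|<1$ in polar coordinates, in place of the crude bound $\diam\sigma+\diam\alpha$, reproduces the right-hand side $\sqrt{l^2+4\pi\lambda(\mathcal A)}$ directly, which is also why the hypothesis $r\ge\sqrt{\pi\lambda(\mathcal A)}$ — needed to make $l$ real — appears.)

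The analytic input, the length--area inequality, is routine. The main obstacle is the topological extraction in the second paragraph: isolating a single arc of the circle $\{|z-p|=t_0\}\cap D$ that performs the separation, and verifying that its image closes up to an honest crosscut (or Jordan curve) of $\D$ rather than spiralling toward $\partial\D$; both rest on the groundwork of Sections \ref{sec:DISK-ARC} and \ref{sec:POLAR} together with the properness of $\phi$. Once $\Delta$ is pinned down, the diameter estimate is elementary plane geometry.
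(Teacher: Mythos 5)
Your argument is the classical crosscut/length--area proof (find a circle $|z-p|=t_0$ whose image has small length, extract a separating crosscut, trap $\phi[C]$ in the small complementary component), and it is genuinely different from the paper's. The paper never extracts a separating crosscut: instead it encloses $\phi[C]$ in an explicit rectangle $R$ of diameter $\sqrt{l_0^2+4m^2}$ and derives a contradiction from the assumption that $\phi[C]$ meets $\partial R$, by building a Jordan curve $J$ out of preimages of radii and an arc inside $C$, invoking Theorem \ref{thm:POLAR} to get a polar separation, and then using Lemma \ref{lm:POLAR.SEP} to force $\Area(\phi[\Omega])>\pi$. The analytic core is the same length--area inequality in both cases, but the paper integrates over the whole family of intermediate circles (via $\lambda(\mathcal{A})$ and $\dinf(\phi[E],\phi[F])$) precisely so that it never has to single out one good circle and one good arc on it. Your route, if completed, actually yields the sharper bound $2\sqrt{\pi\lambda(\mathcal{A})}$, with no $l^2$ term; the paper's weaker bound is the price of avoiding the topological extraction.

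That extraction is the one genuine gap. You assert that ``the topological results of Sections \ref{sec:DISK-ARC} and \ref{sec:POLAR}'' produce a \emph{single} component $\gamma$ of $\{|z-p|=t_0\}\cap D$ separating $C$ from $\phi^{-1}[\overline{D_r(0)}]$ in $D$, but nothing in those sections does this: Lemma \ref{lm:CUTS} and Theorem \ref{thm:POLAR} go in the opposite direction (they locate a component of $C\cap D$ joining two prescribed boundary arcs, for use in the extremal-length estimate), and Theorem \ref{thm:COMP.JORDAN} concerns a Jordan curve cut by a disk, whereas here $D$ is an arbitrary simply connected domain (the paper stresses that Theorem \ref{thm:MODULUS.ESTIMATE} must cover non-Jordan $D$). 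The statement you need is true --- $\{|z-p|=t_0\}\cap D$ is a relatively closed union of disjoint open arcs, each a crosscut splitting $D$ into exactly two pieces, and the resulting tree of complementary components (or, equivalently, a Janiszewski/unicoherence argument) yields one arc on the path between the component containing $C$ and the one containing $\phi^{-1}[\overline{D_r(0)}]$ --- but it is a substantive topological lemma that must be stated and proved, not cited from this paper. The remaining steps (finite length plus properness of $\phi$ giving endpoints of $\sigma$ on $\partial\D$, the winding-number argument identifying the minor arc, and the estimate $\diam(\sigma\cup\alpha)\le\diam\sigma+\diam\alpha$ since $\sigma\cap\alpha\neq\emptyset$) are sound.
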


Note that Theorem \ref{thm:MODULUS.ESTIMATE} applies to non-Jordan domains.  

With Theorem \ref{thm:MODULUS.ESTIMATE} in hand, some basic calculations, which we perform in Section \ref{sec:ESTIMATES}, lead us to the following.

\begin{theorem}\label{thm:DIAM.EST}
Suppose $\phi$ is a conformal map of a Jordan domain $D$ onto the unit disk.  Let $\zeta_0$ be a boundary point of $D$, and let $\epsilon > 0$.  Then, the diameter of $\phi[C(D; \zeta_0, r_0)]$ is smaller than $\epsilon$ whenever $r_0 $ is a positive number that is smaller than  
\begin{equation}
 \sup_{0 < l < \epsilon} \left( 
\exp\left( \frac{8\pi^2}{l^2 - \epsilon^2} \right) \min\left\{|\zeta_0 - \phi^{-1}(w)|\ :\ |w| \leq \sqrt{(1 - l)^2 + \frac{\epsilon^2 - l^2}{4}}\right\} 
\right).\label{eqn:DIAM.EST}
\end{equation}
\end{theorem}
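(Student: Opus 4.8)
The plan is to apply Theorem \ref{thm:MODULUS.ESTIMATE} to a round annulus $\mathcal{A}$ centered at $\zeta_0$ whose inner circle has radius $r_0$; then $C(D;\zeta_0,r_0)$ (well defined by Theorem \ref{thm:BOUNDARY.COMP}) is a connected component of $D\cap D_{r_0}(\zeta_0)$, the points of $D$ inside the inner circle of $\mathcal{A}$, so that theorem will bound $\diam \phi[C(D;\zeta_0,r_0)]$. The whole problem is to choose the outer radius $\rho$ of $\mathcal{A}$, and the auxiliary radius $r$ appearing in Theorem \ref{thm:MODULUS.ESTIMATE}, so that all of its hypotheses hold and the resulting bound falls below $\epsilon$; the precise quantity in \eqref{eqn:DIAM.EST} is exactly what this optimization produces.

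From the hypothesis that $r_0$ is positive and below the supremum in \eqref{eqn:DIAM.EST}, I would fix $l\in(0,\epsilon)$ with
\[
0< r_0<\exp\!\left(\frac{8\pi^2}{l^2-\epsilon^2}\right)m,\qquad m:=\min\!\left\{\,|\zeta_0-\phi^{-1}(w)|\ :\ |w|\leq s\,\right\},\quad s:=\sqrt{(1-l)^2+\tfrac{\epsilon^2-l^2}{4}}.
\]
Since the right-hand side is positive, $m>0$; in particular $s<1$, so $\overline{D_s(0)}\subseteq\D$ and $\phi^{-1}[\overline{D_s(0)}]$ is a compact subset of $D$, whence $m$ really is a positive real number. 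Because $\exp\!\left(8\pi^2/(l^2-\epsilon^2)\right)=\exp\!\left(8\pi^2/(\epsilon^2-l^2)\right)^{-1}$, the inequality reads $r_0\exp\!\left(8\pi^2/(\epsilon^2-l^2)\right)<m$, so I may pick $\rho$ with
\[
r_0\exp\!\left(\frac{8\pi^2}{\epsilon^2-l^2}\right)<\rho<m .
\]
Let $\mathcal{A}$ be the annulus centered at $\zeta_0$ with inner radius $r_0$ and outer radius $\rho$ (here $\rho>r_0$, since $\epsilon^2-l^2>0$ makes the exponential exceed $1$), and set $\lambda:=\lambda(\mathcal{A})=2\pi/\log(\rho/r_0)$. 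The left inequality on $\rho$ gives $\log(\rho/r_0)>8\pi^2/(\epsilon^2-l^2)$, i.e. $4\pi\lambda<\epsilon^2-l^2$.

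Next I would put $r:=\sqrt{(1-l)^2+\pi\lambda}$ and check the hypotheses of Theorem \ref{thm:MODULUS.ESTIMATE}. Clearly $r\geq\sqrt{\pi\lambda}$, and $r^2=(1-l)^2+\pi\lambda<(1-l)^2+\tfrac{\epsilon^2-l^2}{4}=s^2$, so $r<s<1$ and $\overline{D_r(0)}\subseteq\D$. For every $w$ with $|w|\leq r$ we then have $|\zeta_0-\phi^{-1}(w)|\geq m>\rho$, so $\phi^{-1}[\overline{D_r(0)}]$ — a connected set, being a continuous image of $\overline{D_r(0)}$ — lies in $\{z:|z-\zeta_0|>\rho\}$, which is the component of $\C-\overline{\mathcal{A}}$ not containing $\zeta_0$; hence $\overline{\mathcal{A}}$ separates $\zeta_0$ from $\phi^{-1}[\overline{D_r(0)}]$. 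Theorem \ref{thm:MODULUS.ESTIMATE} now applies and yields
\[
\diam \phi[C(D;\zeta_0,r_0)]\leq\sqrt{(l')^2+4\pi\lambda},\qquad l':=1-\sqrt{r^2-\pi\lambda}=1-|1-l|,
\]
where $l'$ denotes the quantity called $l$ in that theorem.

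Finally, since $l'=1-|1-l|\in[-l,l]$ for every $l\geq 0$, we get $(l')^2\leq l^2$, so
\[
\diam \phi[C(D;\zeta_0,r_0)]\leq\sqrt{l^2+4\pi\lambda}<\sqrt{l^2+(\epsilon^2-l^2)}=\epsilon,
\]
which is the claim. The step I expect to require the most care is the parameter matching: recognizing that pushing $\pi\lambda(\mathcal{A})$ up toward $\tfrac{\epsilon^2-l^2}{4}$ while pushing $\rho$ up toward $m$ is what generates both the factor $\exp\!\big(8\pi^2/(l^2-\epsilon^2)\big)$ and the radius $\sqrt{(1-l)^2+\tfrac{\epsilon^2-l^2}{4}}$ inside \eqref{eqn:DIAM.EST}, and simultaneously checking that the choice $r<s$ keeps the separation hypothesis of Theorem \ref{thm:MODULUS.ESTIMATE} intact.
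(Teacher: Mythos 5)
Your proposal is correct and follows essentially the same route as the paper's proof: fix $l$, take the outer radius $\rho$ (the paper's $r_1$) strictly between $r_0\exp\bigl(8\pi^2/(\epsilon^2-l^2)\bigr)$ and the minimum distance, set $r=\sqrt{(1-l)^2+\pi\lambda(\mathcal{A})}$, verify the separation hypothesis, and apply Theorem \ref{thm:MODULUS.ESTIMATE} to get $\sqrt{l^2+4\pi\lambda(\mathcal{A})}<\epsilon$. Your extra checks (that $m>0$ forces $s<1$, and that $1-|1-l|$ handles the case $l\geq 1$) are minor refinements the paper leaves implicit.
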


When $0 < \epsilon < 1$ and $l = \frac{\epsilon}{2}$, 
\[
\frac{7}{16} < (1 - l)^2 + \frac{\epsilon^2 - l^2}{4} < 1.
\]
Thus, (\ref{eqn:DIAM.EST}) is positive when $0 < \epsilon < 1$.  In other words, for all sufficiently small $\epsilon > 0$, there \emph{is} a positive number $r_0$ that is smaller than (\ref{eqn:DIAM.EST}).

So, suppose $\phi$ is a conformal map of a Jordan domain $D$ onto the unit disk.  We use Theorem \ref{thm:DIAM.EST} to form an extension of $\phi$ to $\overline{D}$ as follows.  Let $\zeta_0$ be a boundary point of $D$.  Note that $C(D;\zeta_0, r') \subseteq C(D; \zeta_0, r)$ when $0 < r' < r$.  It follows from Theorem \ref{thm:DIAM.EST} that there is exactly one point in 
\[
\bigcap_{r > 0} \overline{\phi[C(D; \zeta_0, r)]}.
\]
We define this point to be $\phi(\zeta_0)$.  

Our next goal is to show that this extension of $\phi$ is continuous.  That is, $\lim_{z \rightarrow \zeta} \phi(z) = \phi(\zeta)$ whenever $\zeta$ is a boundary point of $D$.  
This is accomplished by showing that $z \in C(D; \zeta, r)$ whenever $z \in D$ is sufficiently close to $\zeta$.  This is where we use moduli of local connectivity.  Namely, in Section \ref{sec:DISK-ARC} we prove the following.

\begin{theorem}\label{thm:MLC}
Suppose $g$ is a modulus of local connectivity for a Jordan curve $\sigma$.  Suppose $D$ is an open disk whose boundary separates two points of $\sigma$.  Suppose $z_0$ and $\zeta_0$ are points so that $\zeta_0 \in \sigma \cap D$, $z_0 \in D - \sigma$, and $|z_0 - \zeta_0| < 2^{-g(k)}$ where $2^{-k} + 2^{-g(k)} \leq \max\{d(\zeta_0, \partial D), d(z_0, \partial D)\}$.  Then, $\zeta_0$ is a boundary point of the connected component of $z_0$ in $D - \sigma$.
\end{theorem}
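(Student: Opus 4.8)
The plan is to show that near $\zeta_0$ all of $\sigma$ lies on a single crosscut of a small disk, and then to follow the component of $z_0$ through the resulting crosscut decomposition. Throughout set $\rho = 2^{-k}$ and $r = 2^{-g(k)}$, and let $C$ be the connected component of $z_0$ in $D - \sigma$; since $C$ is open and $\zeta_0 \in \sigma$ is not in $C$, it suffices to prove $\zeta_0 \in \overline{C}$. First I would record the elementary consequences of the hypothesis: from $\rho + r \le \max\{d(\zeta_0,\partial D),\, d(z_0,\partial D)\}$ and $|z_0-\zeta_0| < r$ one gets $\rho < d(\zeta_0,\partial D)$, hence $\overline{D_\rho(\zeta_0)} \subseteq D$ and $z_0 \in D_r(\zeta_0)$; one also gets $r \le \rho$, since $\sigma$ is connected with $\diam\sigma \ge d(\zeta_0,\partial D) > \rho$, so if $r > \rho$ it would contain a pair of points at distance in $(\rho,r]$, which the modulus would join inside $\sigma$ by an arc of diameter $< \rho$ — impossible. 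Next let $\tau$ be the connected component of $\zeta_0$ in $\sigma \cap D$. Since $\partial D$ separates two points of $\sigma$, $\sigma \not\subseteq D$, so $\tau$ is a proper open subarc of $\sigma$ and $\overline{\tau}$ is a crosscut of $D$; write $D - \overline{\tau} = V_1 \sqcup V_2$ with $z_0 \in V_1$. Then $\zeta_0 \in \tau = \partial V_1 \cap D$, and $C \subseteq V_1$ because $C$ is connected, contains $z_0$, and misses $\overline{\tau}$.

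The one place the modulus enters is the following step. If $p \in \sigma$ and $0 < |p-\zeta_0| \le r$, then $\sigma$ contains an arc from $p$ to $\zeta_0$ of diameter $< \rho$, which therefore lies in $D_\rho(\zeta_0)$, hence in the component $\gamma^*$ of $\zeta_0$ in $\sigma \cap D_\rho(\zeta_0)$; thus $\sigma \cap D_r(\zeta_0) \subseteq \gamma^*$, and since $D_\rho(\zeta_0) \subseteq D$ we also get $\gamma^* \subseteq \tau$. Because $V_1 \cap \tau = \emptyset$ it follows that $V_1 \cap D_r(\zeta_0)$ is disjoint from $\sigma$, i.e. $V_1 \cap D_r(\zeta_0) \subseteq D - \sigma$; and since $\zeta_0 \in \partial V_1$ is the centre of $D_r(\zeta_0)$, $\zeta_0 \in \overline{V_1 \cap D_r(\zeta_0)}$. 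Hence the proof reduces to the claim that $V_1 \cap D_r(\zeta_0) \subseteq C$.

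To prove this I would fix $v \in V_1 \cap D_r(\zeta_0)$ and place it in the same component of $D - \sigma$ as $z_0$. Every component of $\sigma \cap D$ other than $\tau$ lies wholly in $V_1$ or wholly in $V_2$; let $\{\delta_i\}$ be those lying in $V_1$, so $\sigma \cap V_1 = \bigcup_i \delta_i$, each $\overline{\delta_i}$ is a crosscut of the Jordan domain $V_1$, and $C$ is the component of $z_0$ in $V_1 - \sigma = V_1 \setminus \bigcup_i \delta_i$. Since $\sigma \cap D_r(\zeta_0) \subseteq \tau$ while $\delta_i$ is disjoint from $\tau$, each $\delta_i$ misses $D_r(\zeta_0)$. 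Now $\overline{\delta_i}$ and $\overline{\tau}$ are disjoint, unlinked crosscuts of $D$: the component $D_i^-$ of $D - \overline{\delta_i}$ containing $\overline{\tau}$ meets $V_1$ in exactly one of the two components of $V_1 \setminus \delta_i$, and $D_r(\zeta_0) \subseteq D_i^-$ because $D_r(\zeta_0)$ is connected, misses $\overline{\delta_i}$, and contains $\zeta_0 \in \tau$. Hence $z_0$ and $v$ both lie in $V_1 \cap D_i^-$, i.e. on the same side of $\delta_i$ — and this holds for every $i$.

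The remaining step — and the point I expect to be the main obstacle — is to pass from "$z_0$ and $v$ lie on the same side of each $\delta_i$" to "$z_0$ and $v$ lie in the same component of $V_1 \setminus \bigcup_i \delta_i$", i.e. to rule out that infinitely many components of $\sigma \cap D$ inside $V_1$ collectively fence $z_0$ off from $\zeta_0$. I would handle this by first noting that $\{\delta_i\}$ is a pairwise disjoint, relatively closed family of crosscuts of $V_1$ (relatively closed because $\sigma$ is closed; moreover any compact subset of $V_1$ meets only finitely many $\delta_i$, the $\delta_i$ being open in $\sigma$), and then invoking the standard description of the components of the complement of such a family — a laminar, tree-like decomposition, which I would isolate as a separate lemma among the topological preliminaries of Section \ref{sec:DISK-ARC}: two points of $V_1 \setminus \bigcup_i \delta_i$ lie in the same component if and only if no single $\delta_i$ separates them. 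Granting this, $v$ lies in $C$; as $v$ was arbitrary, $V_1 \cap D_r(\zeta_0) \subseteq C$, so $\zeta_0 \in \overline{V_1 \cap D_r(\zeta_0)} \subseteq \overline{C}$ and hence $\zeta_0 \in \partial C$. Everything else is either the single use of the modulus in the second paragraph or routine facts about crosscuts.
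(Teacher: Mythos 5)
Your route is genuinely different from the paper's, and most of it is sound: the derivation of $r\le\rho$ and $\overline{D_\rho(\zeta_0)}\subseteq D$, the identification of $\tau$ as a crosscut with $C\subseteq V_1$, the use of the modulus to show $\sigma\cap D_r(\zeta_0)\subseteq\tau$, and the observation that $z_0$ and any $v\in V_1\cap D_r(\zeta_0)$ lie on the same side of every crosscut $\overline{\delta_i}$ are all correct. But the step you yourself flag as the expected obstacle is a genuine gap, not a routine citation: the claim that two points of $V_1\setminus\bigcup_i\delta_i$ lie in the same component if and only if no single $\delta_i$ separates them is exactly the hard content of this circle of ideas, and local finiteness of $\{\delta_i\}$ in $V_1$ plus laminarity does not make it ``standard'' in any form you can invoke off the shelf. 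The natural attempt --- take an arc from $z_0$ to $v$, list the finitely many $\delta_i$ it meets, and reroute around each --- does not terminate, because each rerouting near $\partial V_1$ can introduce meetings with new crosscuts (the $\delta_i$ may accumulate everywhere on $\partial V_1$, including on $\overline{\tau}$). The paper's proof of Theorem \ref{thm:COMP.JORDAN} is devoted to precisely this difficulty: it orders the components of $\sigma\cap E^-$ by betweenness, uses local connectivity of $\sigma$ (not merely local finiteness) to show there are no infinite increasing chains, builds the boundary Jordan curve of $C$ out of the maximal components, and only then proves your separation criterion (claims (1), (2) and (4) there). So your ``separate lemma'' is not a preliminary; it is the theorem.

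It is also worth noting what the paper's proof buys by taking a different entry point. Rather than showing a whole set $V_1\cap D_r(\zeta_0)$ lies in $C$, it picks the first point $z_1$ of $\sigma$ on the segment $[z_0,\zeta_0]$, which is automatically a boundary point of $C$ since $[z_0,z_1)$ is a connected subset of $D-\sigma$; the modulus then joins $z_1$ to $\zeta_0$ by an arc of diameter less than $2^{-k}$ lying in $D$, so $\zeta_0$ and $z_1$ are in the same component of $\sigma\cap D$, and the ``Furthermore'' clause of Theorem \ref{thm:COMP.JORDAN} (the whole component of a boundary point of $C$ in $\sigma\cap D$ lies in $\partial C$) finishes immediately. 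If you prove your decomposition lemma you will essentially have reproved Theorem \ref{thm:COMP.JORDAN}; the shorter path is to use that theorem directly via the segment trick.
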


Theorem \ref{thm:MLC} was previously proven by means of the Carath\'eodory Theorem in \cite{McNicholl.2013.MLQ}.  We give another proof here with a few extra topological steps so as to avoid circular reasoning.  

We then obtain the following form of the Carath\'eodory Theorem from Theorems \ref{thm:DIAM.EST} and Theorem \ref{thm:MLC}.

\begin{theorem}\label{thm:CT}
Suppose $\phi$ is a conformal map of a Jordan domain $D$ onto the unit disk.  Let $\zeta_0$ be a boundary point of $D$.  Then, $\lim_{z \rightarrow \zeta_0} \phi(z) = \phi(\zeta_0)$.  Furthermore, if $g$ is a modulus of local connectivity for the boundary of $D$, then for each $\epsilon > 0$, $|\phi(z_0) - \phi(\zeta_0)| < \epsilon$ whenever $z_0$ is a point in $D$ so that $|z_0 - \zeta_0| < 2^{-g(k)}$ and $k$ is a non-negative integer so that $2^{-k} + 2^{-g(k)}$ is smaller than (\ref{eqn:DIAM.EST}).  Finally, the extension of $\phi$ to $\overline{D}$ is a homeomorphism of $\overline{D}$ with the closed unit disk.
\end{theorem}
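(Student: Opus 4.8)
The plan is to derive the quantitative clause (and hence the existence of the limit) by combining Theorems~\ref{thm:MLC}, \ref{thm:DIAM.EST}, and~\ref{thm:BOUNDARY.COMP}, and then to obtain the homeomorphism statement from a compactness argument together with a topological lemma asserting that the extension is injective on $\partial D$. Throughout I write $\Phi$ for the extension of $\phi$ to $\overline{D}$ constructed just before the statement, so $\Phi|_{D}=\phi$ and, for $\zeta_0\in\partial D$, $\Phi(\zeta_0)$ is the unique point of $\bigcap_{r>0}\overline{\phi[C(D;\zeta_0,r)]}$.

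First I would prove the quantitative clause. Fix $\zeta_0\in\partial D$, a modulus of local connectivity $g$ for $\partial D$ (one exists because $\partial D$ is a Jordan curve, hence a compact, connected, locally connected metric space), $\epsilon>0$, and a non-negative integer $k$ with $r_0:=2^{-k}+2^{-g(k)}$ smaller than~(\ref{eqn:DIAM.EST}); let $z_0\in D$ with $|z_0-\zeta_0|<2^{-g(k)}$. I would apply Theorem~\ref{thm:MLC} with $\sigma=\partial D$ and with the open disk taken to be $D_{r_0}(\zeta_0)$: its boundary separates two points of $\partial D$ (the center $\zeta_0$ lies inside it, and a bound on $r_0$ coming from~(\ref{eqn:DIAM.EST}) forces some point of $\partial D$ to lie strictly outside it), and $d(\zeta_0,\partial D_{r_0}(\zeta_0))=r_0=2^{-k}+2^{-g(k)}$, so the hypotheses hold for this $k$. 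The conclusion is that $\zeta_0$ lies on the boundary of the connected component of $z_0$ in $D_{r_0}(\zeta_0)-\partial D$; since $D_{r_0}(\zeta_0)\cap D$ is open and closed in $D_{r_0}(\zeta_0)-\partial D$, that component coincides with the component of $z_0$ in $D_{r_0}(\zeta_0)\cap D$, which by Theorem~\ref{thm:BOUNDARY.COMP} must be $C(D;\zeta_0,r_0)$. Hence $z_0\in C(D;\zeta_0,r_0)$, and by Theorem~\ref{thm:DIAM.EST} the set $\phi[C(D;\zeta_0,r_0)]$ has diameter smaller than $\epsilon$. Since both $\phi(z_0)$ and $\Phi(\zeta_0)$ lie in $\overline{\phi[C(D;\zeta_0,r_0)]}$, we get $|\phi(z_0)-\Phi(\zeta_0)|<\epsilon$. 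For the limit: $g$ being a modulus of local connectivity gives $2^{-k}+2^{-g(k)}\to 0$, and~(\ref{eqn:DIAM.EST}) is positive once $\epsilon<1$ (as noted after Theorem~\ref{thm:DIAM.EST}), so, replacing $\epsilon$ by $\min\{\epsilon,\tfrac12\}$ if need be, a suitable $k$ always exists and $\delta=2^{-g(k)}$ witnesses $\lim_{z\to\zeta_0}\phi(z)=\Phi(\zeta_0)$.

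Next I would show $\Phi$ is a homeomorphism. The limit just established, together with the continuity of $\phi$ on $D$, gives by a routine triangle-inequality argument (approximating any point of $\overline{D}$ near $\zeta_0$ by a point of $D$ near $\zeta_0$) that $\Phi$ is continuous on $\overline{D}$. Then $\Phi[\overline{D}]$ is compact, hence closed; it contains $\phi[D]=\D$, so it contains $\overline{\D}$; and it is contained in $\overline{\D}$ since each $\Phi(\zeta_0)\in\overline{\phi[C(D;\zeta_0,r)]}\subseteq\overline{\D}$. Thus $\Phi$ is onto $\overline{\D}$. Moreover $\Phi[\partial D]\subseteq\partial\D$: if some $\Phi(\zeta_0)$ had modulus less than $1$, then for $z_n\in D$ with $z_n\to\zeta_0$ we would have $\phi(z_n)\to\Phi(\zeta_0)\in\D$, whence $z_n=\phi^{-1}(\phi(z_n))\to\phi^{-1}(\Phi(\zeta_0))\in D$ by continuity of $\phi^{-1}$, contradicting $z_n\to\zeta_0\notin D$. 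Since $\phi|_D$ is injective, $\Phi[D]=\D$, and $\Phi[\partial D]\subseteq\partial\D$, the only possible coincidences $\Phi(\zeta_1)=\Phi(\zeta_2)$ occur with $\zeta_1,\zeta_2\in\partial D$; so it remains only to prove that $\Phi$ is injective on $\partial D$, and then $\Phi$ will be a continuous bijection from the compact space $\overline{D}$ onto the Hausdorff space $\overline{\D}$, hence a homeomorphism.

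The hard part will be this last step. I would argue by contradiction, assuming $\zeta_1\ne\zeta_2$ in $\partial D$ with $\Phi(\zeta_1)=\Phi(\zeta_2)=\omega\in\partial\D$. The idea is to build a crosscut of $D$ -- an arc contained in $\overline{D}$ with its two endpoints on $\partial D$ and its interior in $D$ -- joining $\zeta_1$ to $\zeta_2$ (after, if necessary, replacing one of them by another boundary point with the same property), for instance as the closure of the $\Phi$-preimage of a radius of $\overline{\D}$ ending at $\omega$; this crosscut divides $D$ into two components, and, $\phi$ being a homeomorphism of $D$ onto $\D$, their images are exactly the two components into which the image of the crosscut cuts $\D$. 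Comparing, via the Jordan curve theorem, the boundary arc of $\partial D$ lying against each of the two components with the fact that $\Phi[\partial D]=\partial\D$ should yield a contradiction, \emph{provided} that $\Phi$ does not collapse any nondegenerate subarc of $\partial D$ to a point. That proviso is where conformality is genuinely used: it follows by running the diameter estimates of Section~\ref{sec:ESTIMATES} for $\phi^{-1}$ as well (equivalently, by showing that $\phi^{-1}$ also extends continuously to $\overline{\D}$, using a length--area bound together with a modulus of local connectivity for $\partial D$), since a conformal map is never constant on an arc of the boundary of its domain. This collapsing issue is the main obstacle; everything else is bookkeeping with the theorems already proved.
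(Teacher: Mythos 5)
Your treatment of the quantitative clause and of the limit is essentially the paper's own argument: set $r_0=2^{-k}+2^{-g(k)}$, use Theorem~\ref{thm:MLC} (together with Theorem~\ref{thm:BOUNDARY.COMP}) to conclude $z_0\in C(D;\zeta_0,r_0)$, and then invoke Theorem~\ref{thm:DIAM.EST}. Your surjectivity argument (compactness of $\Phi[\overline{D}]$, which contains $\D$ and hence $\overline{\D}$) is a clean alternative to the paper's Bolzano--Weierstrass argument, and the reduction of the homeomorphism claim to injectivity on $\partial D$ is standard and correct.

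The genuine gap is the injectivity on $\partial D$, which you yourself flag as ``the main obstacle.'' Your route hinges on showing that $\Phi$ collapses no nondegenerate subarc of $\partial D$, and neither of your two suggested justifications is available: the diameter estimates of Section~\ref{sec:ESTIMATES} cannot simply be ``run for $\phi^{-1}$,'' because Theorem~\ref{thm:MODULUS.ESTIMATE} is built around the target being the unit disk (the rectangle construction and the final area bound by $\pi$ both use the disk's geometry), and the fact that ``a conformal map is never constant on an arc of the boundary'' is itself a nontrivial theorem (F.~and M.~Riesz, or a reflection argument) proved nowhere in the paper; invoking the continuous extendability of $\phi^{-1}$ to $\overline{\D}$ is essentially invoking the classical Carath\'eodory theorem that the paper is trying to reprove. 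The paper avoids all of this with a direct length--area argument that needs no non-collapsing fact: if $\phi(\zeta_0)=\phi(\zeta_1)=p$ with $\zeta_0\neq\zeta_1$, take a crosscut $\alpha$ of $D$ joining $\zeta_0$ to $\zeta_1$; by the limit already established, $\sigma=\phi[\alpha]$ is a Jordan curve in $\overline{\D}$ meeting the unit circle only at $p$. For annuli $\mathcal{A}$ centered at $p$ with fixed outer radius $R$ and inner radius $r\to 0^+$, Theorem~\ref{thm:POLAR} gives a polar separation $(E,F)$ of the boundary of $\Omega=\mathcal{A}\cap\operatorname{Int}(\sigma)$ by the two halves of $\sigma$; since $R$ is fixed, $\dinf(\phi^{-1}[E],\phi^{-1}[F])$ stays bounded below (the two preimage arcs are the two halves of $\alpha$, each truncated away from their common endpoint $\phi^{-1}(q)$), while $\lambda(\mathcal{A})\to 0$. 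Lemma~\ref{lm:POLAR.SEP} applied to $\phi^{-1}$ then forces $\Area(\phi^{-1}[\Omega])\to\infty$, contradicting $\phi^{-1}[\Omega]\subseteq D$. You should replace your crosscut-comparison sketch with this argument (or supply an independent proof of the non-collapsing claim), since as written the injectivity step does not go through.
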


The proof of Theorem \ref{thm:CT} is given in Section \ref{sec:CT}.

Suppose $\phi$, $D$, $g$, $\zeta_0$ are as in Theorem \ref{thm:CT}.  Without loss of generality suppose $g$ is increasing.  Thus $2^{-k} + 2^{-g(k)} \leq 2^{-k + 1}$.  Let $0 < \epsilon < 1$.  We define a positive number $\delta(\zeta_0, \epsilon)$ so that $|\phi(z) - \phi(\zeta_0)| < \epsilon$ when $|z - z_0| < \delta(\zeta_0, \epsilon)$.  Let:  
\begin{eqnarray*}
k(\zeta_0, \epsilon) & = & 2 - \left\lfloor \sup_{0 < l < \epsilon} \left( \frac{8\pi^2}{l^2 - \epsilon^2} + \right.\right.\\
& & \left.\left. \min\left\{ \log|\zeta_0 - \phi^{-1}(w)|\ :\ |w| \leq \sqrt{(1 - l)^2 + \frac{\epsilon^2 - l^2}{4}}\right\} \right)\right\rfloor\\
\delta(\zeta_0, \epsilon) & = & 2^{-k(\zeta_0, \epsilon)} + 2^{-g(k(\zeta_0, \epsilon))}
\end{eqnarray*}
(Here, $\lfloor x \rfloor$ denotes the largest integer that is not larger than $x$.)  Thus, by Theorem \ref{thm:CT}, $|\phi(z) - \phi(\zeta_0)| < \epsilon$ whenever $z \in D$ and $|z - \zeta_0| < \delta(\zeta_0, \epsilon)$.

\section{Proofs of Theorems \ref{thm:BOUNDARY.COMP} and \ref{thm:MLC}}\label{sec:DISK-ARC}

Theorem \ref{thm:MLC} is used to prove Theorem \ref{thm:BOUNDARY.COMP}.  The proof of Theorem \ref{thm:MLC} is based on the following lemma and theorem.  

\begin{lemma}\label{lm:SIDES}
Let $D$ be a Jordan domain.  Let $\alpha$ be a crosscut of $D$, and let $\gamma_1$, $\gamma_2$ be the subarcs of the boundary of $D$ that join the endpoints of $\alpha$.  Then, the interior of $\gamma_1 \cup \alpha$ is one side of $\alpha$, and the interior of $\gamma_2 \cup \alpha$ is the other side of $\alpha$.
\end{lemma}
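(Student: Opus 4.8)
The plan is to reduce the statement to the special case of the crosscut being a diameter of the unit disk, transported via a conformal automorphism of the disk and the Jordan Curve Theorem. Concretely, let $\psi$ be a conformal map of $D$ onto the unit disk $\D$; by the Carath\'eodory Theorem (which, in the logical organization of the paper, is \emph{not} yet available and so cannot be used — see the obstacle paragraph below), one would like to say $\psi$ extends to a homeomorphism $\overline{D}\to\overline{\D}$ carrying $\alpha$ to a crosscut $\tilde\alpha$ of $\D$ and $\gamma_1,\gamma_2$ to the two complementary boundary arcs. Since a crosscut of $\D$ together with either complementary boundary arc is a Jordan curve, the Jordan Curve Theorem splits $\D$ into two Jordan subdomains, one bounded by $\tilde\gamma_1\cup\tilde\alpha$ and one bounded by $\tilde\gamma_2\cup\tilde\alpha$; these are precisely the two components of $\D-\tilde\alpha$, i.e.\ the two sides of $\tilde\alpha$. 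Pulling back by $\psi^{-1}$ identifies the interiors of $\gamma_1\cup\alpha$ and $\gamma_2\cup\alpha$ with the two sides of $\alpha$ in $D$.

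To make this rigorous without invoking conformal mapping, I would argue purely topologically. First observe that $\gamma_1\cup\alpha$ and $\gamma_2\cup\alpha$ are each Jordan curves (each is a union of two arcs meeting exactly at their common pair of endpoints), so by the Jordan Curve Theorem each has a well-defined interior $U_i=\Int(\gamma_i\cup\alpha)$, a bounded Jordan domain. Next I would show $U_1,U_2\subseteq D$: a point of $U_i$ is separated from $\infty$ by $\gamma_i\cup\alpha\subseteq\overline D$; one checks $U_i$ cannot meet $\C-\overline D$ (a path from such a point to $\infty$ inside the connected open set $\C - \overline{D}$, extended a little, would have to cross $\gamma_i\cup\alpha$, forcing it to meet $\partial D$, a contradiction) and cannot meet $\partial D$ except possibly along $\gamma_i$ — but $U_i$ is open and $\partial D$ has empty interior, so in fact $U_i\subseteq D$. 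Then I would show $U_1$ and $U_2$ are disjoint and lie on opposite sides of $\alpha$ in $D$: the arc $\alpha$ is a crosscut, so $D-\alpha$ has exactly two components $V_1,V_2$ with $\partial V_j\cap\partial D$ equal to $\gamma_j$ minus endpoints (this is the standard structure of crosscuts, which I will cite); each $U_i$ is connected and disjoint from $\alpha$, hence contained in one of the $V_j$, and a boundary/accessibility argument near an interior point of $\gamma_i$ pins down $U_i\subseteq V_i$. Finally, for the reverse inclusion $V_i\subseteq U_i$: $\overline{V_i}$ has boundary contained in $\gamma_i\cup\alpha$ (since $\partial V_i\subseteq\partial D\cup\alpha$ and the only boundary arc accessible from $V_i$ is $\gamma_i$), and a connected open set whose boundary is contained in a Jordan curve and which meets the interior of that curve must equal the interior; hence $V_i=U_i$.

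The cleanest route is probably to combine the two inclusions via a counting argument: $D-\alpha = V_1\sqcup V_2$ and also $D-\alpha\supseteq U_1\sqcup U_2$ with $U_i\subseteq V_i$; since $\alpha$ has empty interior and $\overline{U_1}\cup\overline{U_2}\cup\alpha$ is closed, if some $V_i$ strictly contained $U_i$ there would be a point of $V_i$ in $\partial U_i\subseteq\gamma_i\cup\alpha\subseteq\partial D\cup\alpha$, contradicting $V_i\subseteq D-\alpha$. Therefore $V_i=U_i$, which is exactly the assertion that $\Int(\gamma_i\cup\alpha)$ is one side of $\alpha$.

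The main obstacle is genuinely logical rather than mathematical: the paper is building up to the Carath\'eodory Theorem, so in this section one must \emph{not} use it, and the ``obvious'' proof via a boundary homeomorphism of the disk is therefore off-limits. The real work is thus in the purely point-set-topological lemmas about crosscuts — that a crosscut splits a Jordan domain into exactly two pieces, that each piece's boundary meets $\partial D$ in one of the two complementary boundary arcs, and that these pieces are accessible from the relevant boundary arcs — for which I would either cite a standard reference on prime ends / crosscuts (e.g.\ Pommerenke or the plane-topology chapter of Newman) or supply short self-contained arguments using only the Jordan Curve Theorem and the fact that $\partial D$ has empty interior. Care is also needed to make ``side of $\alpha$'' precise: here it means a connected component of $D-\alpha$, and verifying there are exactly two and matching them with $U_1,U_2$ is the step where the Jordan Curve Theorem is doing all the heavy lifting.
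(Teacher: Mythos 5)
Your proposal is correct and follows essentially the same route as the paper: both arguments identify $\Int(\gamma_j\cup\alpha)$ with a side of $\alpha$ by proving two inclusions via connectedness and the Jordan Curve Theorem, taking as given that a crosscut has exactly two sides. The only notable difference is that the paper seeds the matching by choosing a point of each $U_j$ inside a small disk about a point of $\alpha\cap D$ and gets the reverse inclusion by observing that each side is a connected subset of $\C-(\gamma_j\cup\alpha)$ meeting $U_j$, which lets it avoid the crosscut boundary-structure fact ($\partial V_j\cap\partial D=\gamma_j$ minus endpoints) that your middle paragraph proposes to cite and which is uncomfortably close to the lemma itself --- though your final counting argument does not actually need it.
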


\begin{proof}
Let $U_j$ denote the interior of $\alpha \cup \gamma_j$.  Choose a point $p$ in $\alpha \cap D$. There is a positive number $\delta$ so that $D_\delta(p) \subseteq D$.  Since $p$ is a boundary point of $U_j$, $U_j \cap D_\delta(p)$ is non-empty.   So, let $q_j \in U_j \cap D_\delta(p)$, and let $D_j$ be the side of $\alpha$ that contains $q_j$.

We show that $U_j = D_j$.  $U_j$ is a connected subset of $D - \alpha$ that contains a point of $D_j$ (namely $q_j$).  So, $U_j \subseteq D_j$.  On the other hand, $D_j$ is a connected subset of $\C - (\gamma_j \cup \alpha)$ that contains a point of $U_j$.  So, $D_j \subseteq U_j$.  

$D_1 \neq D_2$ since $\partial D_1 \neq \partial D_2$.   Thus, $U_1$ and $U_2$ are the two sides of $\alpha$. 
\end{proof}

\begin{theorem}\label{thm:COMP.JORDAN}
Let $D$ be an open disk, and let $\sigma$ be a Jordan curve.  Suppose the boundary of $D$ separates two points of $\sigma$.  Let $C$ be a connected component of $D - \sigma$.
Then, $C$ is the interior of a Jordan curve.  Furthermore, if $p$ is a boundary point of $C$ that also lies in $D$, then $p$ lies on $\sigma$ and the boundary of $C$ includes the connected component of $p$ in $D \cap \sigma$.
\end{theorem}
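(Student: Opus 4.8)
The plan is to analyze the component $C$ of $D-\sigma$ by understanding its boundary in terms of crosscuts of $D$ cut off by $\sigma$. First I would establish that $C$ is open (it is a connected component of the open set $D-\sigma$) and that $\partial C \subseteq \partial D \cup (\sigma \cap \overline D)$; the interesting part of the boundary is where it meets $\sigma$. Since the boundary circle $\partial D$ separates two points of $\sigma$, the curve $\sigma$ must cross $\partial D$, so $\sigma \cap D$ is nonempty and its closure contains at least one subarc of $\sigma$ with both endpoints on $\partial D$ — a crosscut of $D$. The key structural observation is that each connected component of $\sigma \cap \overline D$ which meets $\partial D$ is either such a crosscut or an arc of $\sigma$ with endpoints on $\partial D$, and these crosscuts chop $D$ into pieces. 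The component $C$ is one of these pieces (or an intersection of "sides").

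The main steps, in order: (1) Identify the crosscuts: show that the closure of $\sigma \cap D$ meets $\partial D$ in finitely many points is too strong in general, so instead argue componentwise — let $K$ be the connected component of $p$ in $\sigma \cap \overline D$ where $p \in \partial C \cap D$; show $K$ is a subarc of $\sigma$ (using that $\sigma$ is a Jordan curve and $K$ is a connected closed subset) whose endpoints, if any, lie on $\partial D$, so $K \cap D$ is a crosscut or union of crosscuts of $D$. Actually, since $p \in D$ and $p$ is a limit of points of $C \subseteq D - \sigma$, and since $D$ is open, a neighborhood of $p$ lies in $D$; within that neighborhood $\sigma$ looks like an arc through $p$ separating the neighborhood into two local sides, and $C$ borders one of them — this forces $p \in \sigma$ and gives the local picture. (2) Globalize: use Lemma \ref{lm:SIDES} to show that the relevant crosscut $\alpha$ (the component $K$ of $p$ in $\sigma \cap D$, which is a crosscut because it cannot have an endpoint inside $D$ — $\sigma$ being a Jordan curve, if it entered $D$ it must leave) has two sides in $D$, and $C$ is contained in one of them. (3) Iterate/intersect: $C$ is the intersection of the sides (one per crosscut of $\sigma$ in $D$) that contain it; show this intersection is itself the interior of a Jordan curve by assembling the relevant subarcs of $\sigma$ and $\partial D$ into a single Jordan curve whose interior is $C$. (4) Conclude that $\partial C$ consists of arcs of $\sigma$ (the crosscuts bounding $C$) together with arcs of $\partial D$; in particular if $p \in \partial C \cap D$ then $p$ lies on one of these crosscuts, hence on $\sigma$, and the whole component of $p$ in $\sigma \cap D$ — being a single crosscut $\alpha$ bounding $C$ — lies in $\partial C$, which is the asserted statement.

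The hard part will be step (3): showing that the (possibly infinite) intersection of sides is the interior of a single Jordan curve. There could a priori be infinitely many crosscuts of $\sigma$ cutting into $D$, and one must show only finitely many of them actually bound $C$, or otherwise control the limiting behavior. I expect the resolution is to use that $\sigma$, being a Jordan curve, is locally connected and that $\sigma \cap \overline D$ has only finitely many components meeting $\partial D$ that are "maximal" in a suitable sense — or, more robustly, to work with a single crosscut at a time: prove that cutting $D$ along one crosscut $\alpha$ of $\sigma$ (the component of $p$) and taking the side $U$ containing $C$, we have that $C$ is a component of $U - \sigma$, and induct on a complexity that strictly decreases (e.g., $\sigma \cap U$ is a proper subset of $\sigma \cap D$), with a compactness/connectedness argument showing the process terminates or stabilizes. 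An alternative, cleaner route: directly build $\partial C$ as a Jordan curve by following $\partial C$ as a simple closed curve — walk along $\partial D$ until hitting an endpoint of a crosscut of $\sigma$, follow that crosscut, and continue; the Jordan curve theorem and Lemma \ref{lm:SIDES} then identify the region enclosed as exactly $C$. I would aim to carry out whichever of these is shortest, likely the single-crosscut reduction combined with Lemma \ref{lm:SIDES}.
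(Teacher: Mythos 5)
Your steps (1), (2), and (4) track the paper's argument closely: the paper also isolates the component $E$ of $p$ in $\sigma\cap D$, notes that (because $\partial D$ separates two points of $\sigma$) every component of $\sigma\cap D$ closes up to a crosscut, applies Lemma~\ref{lm:SIDES} to get the side $E^-$ containing $C$, and finishes by assembling subarcs of $\sigma$ and $\partial D$ into a Jordan curve $J$ with $\Int J=C$. But the step you yourself flag as the hard part --- step (3) --- is exactly where your proposal has a genuine gap, and the specific resolutions you float would not work. It is false in general that only finitely many crosscuts of $\sigma$ bound $C$ (a comb-shaped Jordan curve gives infinitely many), so you cannot reduce to a finite intersection of sides; and an induction on ``$\sigma\cap U$ is a proper subset of $\sigma\cap D$'' has no terminating measure, since the set of crosscuts can be infinite and cutting along one crosscut need not reduce any finite complexity. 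Likewise, ``walk along $\partial C$'' presupposes that $\partial C$ is a simple closed curve, which is essentially the statement to be proved.

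The missing idea, which the paper supplies, is a well-founded ordering of the components of $\sigma\cap E^-$: declare $G_1\prec G_2$ when $G_2$ separates $G_1$ from $E$ inside $E^-$. Local connectivity of $\sigma$ (via the Hahn--Mazurkiewicz theorem) forces the diameters of these components to be summable in the relevant sense, so there is no infinite ascending $\prec$-chain and every component lies below a $\prec$-maximal one. The Jordan curve is then built by splicing \emph{only the maximal} components $\overline{G}$ into the boundary arc $\partial E^-\cap\partial D$ (replacing the subarc $\lambda_G$ cut off by the endpoints of $\overline{G}$), possibly infinitely many of them; the non-maximal components are swept into the exterior of the resulting curve $J$, and a four-claim argument identifies $\Int J$ with $C$. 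Without this ordering (or some equivalent device for selecting which crosscuts appear on $\partial C$ and for controlling an infinite splice), your step (3) does not go through, so the proposal as written is an incomplete plan rather than a proof.
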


\begin{proof}
Since $C \neq D$, the boundary of $C$ contains a point of $\sigma$; let $p$ denote such a point.

Since the boundary of $D$ separates two points of $\sigma$, if $G$ is a connected component of $D \cap \sigma$, then $\overline{G}$ is a crosscut of $D$.  

Let $E$ denote the connected component of $p$ in $\sigma \cap D$.  Since $C$ is a connected subset of $D - E$, there is a side of $E$ that includes $C$; let $E^-$ denote this side, and let $E^+$ denote the other side.  By Lemma \ref{lm:SIDES}, each of these sides is a Jordan domain.  Again, since the boundary of $D$ separates two points of $\sigma$, if $G$ is a connected component of $\sigma \cap E^-$, then $\overline{G}$ is a crosscut of $E^-$.  

We aim to show that the boundary of $C$ is a Jordan curve which includes $E$.  To this end, we construct an arc $F$ so that $E \cup F$ is a Jordan curve whose interior is $C$.  $F$ will be a union of subarcs of $\sigma$ and connected subsets of the boundary of $D$.  To define these subarcs of $\sigma$, we define a partial ordering of the connected components of $\sigma \cap E^-$.  Namely, when $G_1, G_2$ are connected components of $\sigma \cap E^-$, write $G_1 \prec G_2$ if $G_2$ is between $G_1$ and $E$; that is if $E$ and $G_1$ lie in opposite sides of $\overline{G_2}$.

Since $\sigma$ is locally connected, it follows that there is no increasing chain $G_1 \prec G_2 \prec G_3 \prec \ldots$.  It then follows that if $G_1$ is a connected component of $\sigma \cap E^-$, then there is a $\preceq$-maximal component of $\sigma \cap E^-$, $G$, so that $G_1 \preceq G$.

We now define $F$.  
Let $F' = \partial E^- \cap \partial D$.  Thus, $E \cup F' = \partial E^-$.  Let $\mathcal{M}$ denote the set of all $\preceq$-maximal components of $\sigma \cap E^-$.  For each $G \in \mathcal{M}$, let $\lambda_G$ be the subarc of $F'$ that joins the endpoints of $\overline{G}$.  Let $F$ be formed by removing each $\lambda_G$ from $F'$ and replacing it with $\overline{G}$.

Thus, $F$ is an arc that joins the endpoints of $E$ and that contains no other points of $E$.  Let $J = E \cup F$.  Then, $J$ is a Jordan curve.  We show that $C$ is the interior of $J$.  Note that since $J \subseteq \overline{E^-}$, $E^-$ includes the interior of $J$.

When $G \in \mathcal{M}$, let $G^+$ be the side of $\overline{G}$ that includes $E$ (when $\overline{G}$ is viewed as a crosscut of $D$ rather than $E^-$), and let $G^-$ denote the other side.  The rest of the proof revolves around the following four claims.
\begin{enumerate}
	\item For each $G \in \mathcal{M}$, the exterior of $J$ includes $G^-$.\label{cl1}
	
	\item The interior of $J$ includes $\bigcap_{G \in \mathcal{M}} G^+ \cap E^-$.\label{cl2}
	
	\item For each $G \in \mathcal{M}$, $G^+$ includes $C$.\label{cl3}
	
	\item The interior of $J$ contains no point of $\sigma$.\label{cl4}
\end{enumerate}
Claims (\ref{cl2}) and (\ref{cl3}) together imply that the interior of $J$ includes $C$.  Claim (\ref{cl1}) will be used to prove (\ref{cl4}).  Claim (\ref{cl4}) shows that the interior of $J$ is included in a connected component of $D - \sigma$ which then must be $C$.

We begin by proving (\ref{cl1}).  Let $p' \in G^-$.  Let $z_0 \in \C - \overline{D}$.  Thus, $z_0$ is exterior to $J$ since $J \subseteq \overline{D}$.  We construct an arc from $p'$ to $z_0$ that contains no point of $J$.  Let $q \in \lambda_G - \overline{G}$.  By Lemma \ref{lm:SIDES}, $G^-$ is the interior of $G \cup \lambda_G$.  So, there is an arc $\sigma_1$ from $p'$ to $q$ so that $\sigma' \cap \partial G^- = \{q\}$.  There is an arc $\sigma_2$ from $q$ to $z_0$ so that $\sigma_2 \cap \partial D = \{q\}$.  Thus, $\sigma_1 \cup \sigma_2$ is an arc from $p'$ to $z_0$ that contains no point of $J$.  Thus, $p'$ is exterior to $J$ for every $p' \in G^-$.

We now prove (\ref{cl2}).  Suppose $p_0 \in E^-$ belongs to $G^+$ for every $G \in \mathcal{M}$.  By way of contradiction, suppose $p_0$ is exterior to $J$.  Again, let $z_0 \in \C - \overline{D}$.  Thus, the exterior of $J$ includes an arc from $p_0$ to $z_0$; let $\alpha$ denote such an arc.  
By examination of cases, $\alpha$ cannot cross the boundary of $D$ at any boundary point of $E^-$.  So, it must do so at a boundary point of $E^+$.  But, this entails that $\alpha$ crosses $E$ which it does not since $J$ includes $E$.  This is a contradiction, and so $p_0$ is interior to $J$.  

Next, we prove (\ref{cl3}).  Let $G \in \mathcal{M}$.  Since $\sigma$ is locally connected, and since $p \in E$, there is a positive number $\delta$ so that $D_\delta(p)$ contains no point of any connected component of $\sigma \cap E^-$.  However, this disk must contain a point of $C$, $p'$.  So, $[p',p]$ contains a point of $E$ but no point of $G$.  Hence, $p' \in G^+$.  Since $C$ is a connected subset of $D - G$, $C \subseteq G^+$.

Finally, we prove (\ref{cl4}).  By way of contradiction, suppose $p'$ is a point on $\sigma$ that is interior to $J$.  As noted above, $E^-$ includes the interior of $J$.  So, $p' \in \sigma \cap E^-$.  Let $G_1$ be the connected component of $p'$ in $\sigma \cap E^-$.  Let $G$ be a $\preceq$-maximal component of $\sigma \cap E^-$ so that $G_1 \preceq G$.  Since $p'$ lies inside $J$, and since $J$ includes $G$, $p' \not \in G$.  So, $G_1 \prec G$.  This means that $G_1 \subseteq G^-$.  By (\ref{cl1}), $p'$ is exterior to $J$- a contradiction.  So, the interior of $J$ contains no point of $\sigma$.  

By the remarks after (\ref{cl4}), $C$ is the interior of $J$ and the proof is complete.
\end{proof}

\begin{proof}[Proof of Theorem \ref{thm:MLC}]
Let $C$ be the connected component of $z_0$ in $D - \sigma$.  Let $l = [z_0, \zeta_0]$.  Let $z_1$ be the point in $l \cap \sigma$ that is closest to $z_0$.  Thus, $z_1 \in \partial C$.  Since $|z_1 - \zeta_0| < 2^{-g(k)}$, $\sigma$ includes an arc from $z_1$ to $\zeta_0$ whose diameter is smaller than $2^{-k}$; call this arc $\sigma_1$.  

We claim that $D$ includes $\sigma_1$.  For, let $q \in \sigma_1$.  It follows that 
\[
\max\{|q - z_0|, |q - \zeta_0| \} < 2^{-k} + 2^{-g(k)}.
\] 
Since $2^{-k} + 2^{-g(k)} \leq \max\{d(\zeta_0, \partial D), d(z_0, \partial D)\}$, it follows that $q \in D$.

Since $\sigma_1 \subseteq D$, $\zeta_0$ belongs to the connected component of $z_1$ in $D \cap \sigma$.  By the `Furthermore' part of Theorem \ref{thm:COMP.JORDAN}, the boundary of $C$ includes this component.  Thus, $\zeta_0$ is a boundary point of $C$.
\end{proof}

\begin{proof}[Proof of Theorem \ref{thm:BOUNDARY.COMP}]
Without loss of generality, suppose $D_r(\zeta_0)$ does not include $D$.  Let $J$ denote the boundary of $D$.  It follows that $\partial D_r(\zeta_0)$ separates two points of $J$.  

It follows from Theorem \ref{thm:MLC} that $\zeta_0$ is a boundary point of at least one connected component of $D_r(\zeta_0) - J$.  We now show it is a boundary point of exactly two such components.  Let $E$ be the connected component of $\zeta_0$ in $D_r(\zeta_0) \cap J$.  Thus, as noted in the proof of Theorem \ref{thm:COMP.JORDAN}, $\overline{E}$ is a crosscut of $D_r(\zeta_0)$.  If $C$ is a connected component of $D_r(\zeta_0) - J$, and if $\zeta_0$ is a boundary point of $C$, then exactly one side of $E$ includes $C$.  By the proof of Theorem \ref{thm:BOUNDARY.COMP}, if $C$ is a connected component of $D_r(\zeta_0) - J$, then the side of $E$ that includes $C$ completely determines the boundary of $C$.  Thus, $\zeta_0$ is a boundary point of exactly two connected components of $D - J$; one for each side of $E$.

So, let $C_1$, $C_2$ denote the two connected components of $D_r(\zeta_0) - J$ whose boundaries contain $\zeta_0$.  Each of these components is a connected subset of $\C - J$.  So each is either included in the interior of $J$ or in the exterior of $J$.  Since there are points of the interior and exterior of $J$ that are arbitrarily close to $\zeta_0$, it follows from Theorem \ref{thm:MLC} that one of these components is included in the interior of $J$ and one is included in the exterior of $J$.  Suppose $C_1$ is included in the interior of $J$; that is, $D \supseteq C_1$.

Let $p \in C_1$, and let $U$ be the connected component of $p$ in $D \cap D_r(\zeta_0)$.  We show that $U = C_1$.  Since $C_1$ is a connected subset of $D \cap D_r(\zeta_0)$ that contains $p$, $C_1 \subseteq U$.  Since $U$ is a connected subset of $D_r(\zeta_0) - J$ that contains $p$, $U \subseteq C_1$.  This completes the proof of the theorem.
\end{proof}

\section{Preliminaries to proof of Theorem \ref{thm:MODULUS.ESTIMATE}: polar separations}\label{sec:POLAR}

\begin{definition}\label{def:POLAR.SEP}
Let $\mathcal{A}$ be an annulus, and let $\Omega$ be an open subset of $\mathcal{A}$.  A \emph{polar separation} of the boundary of $\Omega$ is a pair of disjoint sets $(E,F)$ so that 
 whenever $C$ is an intermediate circle of $\mathcal{A}$, there is a connected component of $C \cap \Omega$ whose boundary contains a point of $E$ and a point of $F$.
\end{definition}

Our goal in this section is to prove the following.

\begin{theorem}\label{thm:POLAR}
Let $\mathcal{A}$ be an annulus, and let $D$ be a simply connected Jordan domain.  Suppose that 
$\mathcal{A}$ separates two boundary points of $D$, and let $\gamma_1$ and $\gamma_2$ be the subarcs of the boundary of $D$ that join these points. 
Then, $(\gamma_1 \cap \mathcal{A}, \gamma_2 \cap \mathcal{A})$ is a polar separation of the boundary of $D \cap \mathcal{A}$.
\end{theorem}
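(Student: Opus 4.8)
The plan is to take an arbitrary intermediate circle $C$ of $\mathcal{A}$ and produce a single connected component of $C \cap (D \cap \mathcal{A}) = C \cap D$ whose boundary meets both $\gamma_1 \cap \mathcal{A}$ and $\gamma_2 \cap \mathcal{A}$. The starting observation is that $C$, being an intermediate circle of $\mathcal{A}$, separates the two chosen boundary points of $D$ — call them $a$ and $b$, with $a$ inside $C$ and $b$ outside $C$. Since $D$ is a Jordan domain with boundary $\gamma_1 \cup \gamma_2$ (a Jordan curve), and $C$ separates $a$ from $b$, each of $\gamma_1$ and $\gamma_2$ is an arc joining $a$ to $b$, hence each must cross $C$ at least once; so $C \cap D$ is nonempty, and in fact $C$ enters and leaves $D$. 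The first real step is therefore a topological lemma: because $C$ separates two points of $\partial D$, every connected component $G$ of $C \cap D$ is a crosscut of $D$ — exactly the situation already exploited in the proof of Theorem \ref{thm:COMP.JORDAN} (there $C$ was the boundary circle of a disk; here it is an intermediate circle of the annulus, but the argument is identical: $\overline{G}$ is an arc with both endpoints on $\partial D$ and interior in $D$).

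Next I would single out the right component. Consider the point $a$ (inside $C$); the circle $C$ separates $a$ from $b$. Among the crosscuts $\overline{G}$ of $D$ arising as closures of components of $C \cap D$, I want the one that "faces $a$" — more precisely, push along the segment from $a$ toward $b$ and let $G_0$ be the first component of $C \cap D$ one meets; equivalently, let $G_0$ be the component of $C \cap D$ such that $a$ and $b$ lie on opposite sides of the crosscut $\overline{G_0}$ in $D$. (Such a component exists because some component of $C \cap D$ must separate $a$ from $b$ within $D$, since $C$ separates them in the plane and $\partial D$ contributes the rest of any separating set; this is where the Jordan-curve theorem for $\overline{D}$ does the work.) Now by Lemma \ref{lm:SIDES}, the two sides of the crosscut $\overline{G_0}$ in $D$ are the interiors of $\overline{G_0} \cup \gamma_1'$ and $\overline{G_0} \cup \gamma_2'$, where $\gamma_1', \gamma_2'$ are the two subarcs of $\partial D$ joining the endpoints of $\overline{G_0}$. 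Since $a$ and $b$ are separated by $\overline{G_0}$ and lie on $\gamma_1$ and (say going around) are split between the two sides, one checks that $\gamma_1'$ contains a genuine subarc of $\gamma_1$ running to $C$ and $\gamma_2'$ contains a genuine subarc of $\gamma_2$ running to $C$; the endpoints of $\overline{G_0}$ are points of $\partial D \cap C$, one lying on $\gamma_1$ and one on $\gamma_2$ — and these endpoints lie in $\mathcal{A}$ since $C$ is an intermediate circle, strictly between the two bounding circles. Those two endpoints are the required point of $\gamma_1 \cap \mathcal{A}$ and point of $\gamma_2 \cap \mathcal{A}$ in $\partial G_0$, provided they are distinct from each other — which they are, since $\gamma_1$ and $\gamma_2$ meet only at $a$ and $b$, and neither $a$ nor $b$ lies on $C$.

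The main obstacle is the combinatorial bookkeeping in the previous paragraph: showing that the "right" component $G_0$ of $C \cap D$ has one endpoint on $\gamma_1$ and one on $\gamma_2$, rather than both endpoints on the same $\gamma_i$. A component of $C\cap D$ whose closure has both endpoints on $\gamma_1$ would be a crosscut cutting off a piece of $D$ bounded by a subarc of $\gamma_1$, and such a crosscut cannot separate $a$ from $b$ (both of which lie on $\partial D$, one on $\gamma_1$ and one on $\gamma_2$) — here one must argue carefully using Lemma \ref{lm:SIDES} that the two sides of such a crosscut have boundaries $\overline{G}\cup(\text{subarc of }\gamma_1)$ and $\overline{G}\cup(\text{the rest of }\partial D)$, and that $a,b$ necessarily both lie in the closure of the latter side. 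So the selection of $G_0$ as a component whose crosscut separates $a$ from $b$ forces the endpoints of $\overline{G_0}$ to lie one on each of $\gamma_1,\gamma_2$. Once that dichotomy is pinned down, the membership of the endpoints in $\mathcal{A}$ is immediate (intermediate circles avoid the bounding circles), and the theorem follows. I would present this via two short claims — (i) every component of $C\cap D$ is a crosscut of $D$; (ii) a component of $C\cap D$ separates $a$ from $b$ in $D$ if and only if its closure has one endpoint on $\gamma_1$ and one on $\gamma_2$ — and then conclude by exhibiting $G_0$.
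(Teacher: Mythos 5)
Your overall strategy matches the paper's: fix an intermediate circle $C$ of $\mathcal{A}$, note that $C\cap(D\cap\mathcal{A})=C\cap D$, and exhibit a single connected component of $C\cap D$ whose closure is a crosscut of $D$ with one endpoint on $\gamma_1$ and one on $\gamma_2$ (both endpoints automatically lying in $\mathcal{A}$ since they lie on an intermediate circle). Your claim (i) (every component of $C\cap D$ is a crosscut) and your claim (ii) (a component's crosscut separates $a$ from $b$ in $D$ exactly when its endpoints fall one on each $\gamma_i$) are both correct and are argued essentially as the paper would, via Lemma \ref{lm:SIDES}. The slip ``$a$ and $b$ \dots one on $\gamma_1$ and one on $\gamma_2$'' is harmless ($a$ and $b$ are the common endpoints of $\gamma_1$ and $\gamma_2$, so each lies on both), since the argument you actually run --- a crosscut with both endpoints on $\gamma_1\setminus\{a,b\}$ has one side whose boundary arc contains neither $a$ nor $b$ --- is sound.

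The genuine gap is the existence of $G_0$. By your own claim (ii), ``some component of $C\cap D$ separates $a$ from $b$ within $D$'' is equivalent to the conclusion you are trying to prove, so the parenthetical appeal to ``$C$ separates them in the plane and $\partial D$ contributes the rest of any separating set'' is not a proof but a restatement; and the heuristic ``push along the segment from $a$ toward $b$'' fails because that segment need not lie in $\overline{D}$ at all. This existence step is the entire nontrivial content of the theorem, and it is exactly what the paper's Lemma \ref{lm:CUTS} supplies by an explicit construction: choose a crosscut $\alpha$ of $D$ joining the two boundary points so that $\alpha\cap C$ is a single point $p'$; by Lemma \ref{lm:SIDES} the two sides of $\alpha$ are the interiors of $\alpha\cup\gamma_1$ and $\alpha\cup\gamma_2$; a short subarc of $C$ through $p'$ enters both sides, so the component $E$ of $p'$ in $C\cap D$ decomposes as $E_1\cup\{p'\}\cup E_2$ with $E_j$ contained in the side bounded by $\alpha\cup\gamma_j$; and since $C$ meets $\alpha$ only at $p'$, the far endpoint of $E_j$ must lie on $\gamma_j$. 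You need either this construction or some other concrete argument producing the separating component; as written, your proof assumes its conclusion at the decisive step.
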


Our proof of Theorem \ref{thm:POLAR} is based on the following lemma.

\begin{lemma}\label{lm:CUTS}
Let $C$ be a circle, and let $D$ be a simply connected Jordan domain.  Suppose
$C$ separates two boundary points of $D$.  
Then, there is a connected component of $C \cap D$ 
whose boundary hits both subarcs of the boundary of $D$ that join these two boundary points of $D$.
\end{lemma}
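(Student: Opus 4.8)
The plan is to deduce the lemma from Theorems~\ref{thm:BOUNDARY.COMP} and~\ref{thm:COMP.JORDAN}, applied not to $D$ but to the open disk $\Int C$, and then to finish with a short extremal‑parameter argument on the circle $C$ itself. First I would normalize: relabel the two separated boundary points so that $a \in \Int C$ while $b$ lies in the exterior of $C$, and let $\gamma_1,\gamma_2$ be the two subarcs of $J := \partial D$ joining them. Put $K_i := \gamma_i \cap C$; since each $\gamma_i$ runs from a point inside $C$ to a point outside, $K_1$ and $K_2$ are nonempty, compact, and disjoint (as $a,b \notin C$). Travelling along $\gamma_i$ out of $a$, let $p_i$ be the first point of $\gamma_i$ on $C$; then $P := \gamma_1|_{[a,p_1]} \cup \gamma_2|_{[a,p_2]}$ is an arc lying on $J$, passing through $a$, with endpoints $p_1 \in K_1$ and $p_2 \in K_2$ and interior contained in $\Int C$. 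Thus $P$ is a crosscut of the disk $\Int C$, and $P \setminus \{p_1,p_2\}$ is precisely the connected component of $a$ in $J \cap \Int C$.

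Next I would manufacture a Jordan curve carrying $P$. By Theorem~\ref{thm:BOUNDARY.COMP}, applied to $D$ together with a small disk about $a$ contained in $\Int C$, there is a connected component $W$ of $(\Int C) \cap D$ — equivalently, a connected component of $(\Int C) - J$ — with $a$ on its boundary. Since $C = \partial(\Int C)$ separates the two points $a,b$ of $J$, Theorem~\ref{thm:COMP.JORDAN} applies to the disk $\Int C$ and the curve $J$: $W$ is the interior of a Jordan curve $\partial W$, and, because $a$ is a boundary point of $W$ lying in $\Int C$, $\partial W$ contains the component of $a$ in $J \cap \Int C$, hence all of $P$. A little point‑set bookkeeping then records the coarse shape of $\partial W$: it lies in $C \cup J$ and in $\overline{\Int C}$; its points in $\Int C$ lie on $J$; its points in $D$ lie on $C$; and — crucially — because $\partial W$ cannot escape $\overline{\Int C}$, it stays on $C$ in a neighbourhood of each of its points of $C \cap D$. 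Consequently $\partial W \setminus (K_1 \cup K_2)$ is the disjoint union of two sets open in $\partial W$, namely $\mathcal C := \partial W \cap (C \cap D)$ and $\mathcal J := \partial W \cap (J \cap \Int C)$.

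Finally I would extract the component. Let $Q \colon [0,1] \to \partial W$ parametrize the closed subarc of $\partial W$ running from $p_1$ to $p_2$ and avoiding $a$, with $Q(0) = p_1 \in K_1$ and $Q(1) = p_2 \in K_2$. Put $\beta := \min Q^{-1}(K_2)$ and $\alpha := \max\!\big(Q^{-1}(K_1) \cap [0,\beta]\big)$; then $\alpha < \beta$ (as $K_1 \cap K_2 = \varnothing$), and on $(\alpha,\beta)$ the arc $Q$ misses $K_1 \cup K_2$, so the connected set $Q\big((\alpha,\beta)\big)$ lies wholly in $\mathcal C$ or wholly in $\mathcal J$. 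The second alternative is impossible: there $\overline{Q((\alpha,\beta))}$ would be a subarc of $J$ joining a point of $K_1$ to a point of $K_2$ and avoiding both $a$ (it lies on $Q$) and $b$ (it lies in $\overline{\Int C}$), hence contained in a single $\gamma_i$, which would force its two endpoints into the same $K_i$. Therefore $Q((\alpha,\beta)) \subseteq C \cap D$; since its endpoints $Q(\alpha),Q(\beta)$ lie on $J$ it cannot be enlarged within $C \cap D$, so it is a full connected component $G_0$ of $C \cap D$, and its boundary contains $Q(\alpha) \in \gamma_1$ and $Q(\beta) \in \gamma_2$, which is exactly what the lemma asserts.

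I expect the main obstacle to be the middle paragraph: recognizing that one should invoke Theorem~\ref{thm:COMP.JORDAN} for the disk $\Int C$ with boundary curve $J$ rather than for $D$, and then correctly pinning down the local structure of the Jordan curve $\partial W$ — in particular the assertion that, since $\partial W \subseteq \overline{\Int C}$, it runs along $C$ near each of its $C \cap D$‑points, which is precisely what makes $\mathcal C$ and $\mathcal J$ split $\partial W \setminus (K_1 \cup K_2)$. Once that local picture is secured, the extremal‑parameter argument of the last paragraph is routine, as are the opening normalization and the verification that $P$ is a crosscut.
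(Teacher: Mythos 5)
Your proof is correct, but it takes a genuinely different route from the paper's. The paper argues locally: it picks a crosscut $\alpha$ of $D$ meeting $C$ in exactly one point $p'$, uses Lemma \ref{lm:SIDES} to identify the two sides of $\alpha$ with the interiors of $\alpha\cup\gamma_1$ and $\alpha\cup\gamma_2$, shows that $C$ enters both sides near $p'$, and assembles the desired component of $C\cap D$ from the two components of $C\cap D_j$ emanating from $p'$, each of which must terminate on its own $\gamma_j$. That argument is short and elementary but quietly assumes such a crosscut exists. You instead dualize the roles of the disk and the Jordan curve: you work inside $\Int C$, apply Theorem \ref{thm:BOUNDARY.COMP} to get a component $W$ of $\Int C - J$ with the inner point $a$ on its boundary, invoke the structure theorem (Theorem \ref{thm:COMP.JORDAN}) to see that $\partial W$ is a Jordan curve containing the arc $P$ of $J$ through $a$, and then trace the complementary arc of $\partial W$ from $K_1$ to $K_2$, extracting a maximal open arc of $C\cap D$ whose endpoints land on $\gamma_1$ and $\gamma_2$ respectively. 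This is heavier machinery, but it is machinery the paper has already established independently of Lemma \ref{lm:CUTS} (so there is no circularity), and it sidesteps the crosscut-existence issue; the price is the extra bookkeeping needed to decompose $\partial W\setminus(K_1\cup K_2)$ into the two relatively open pieces $\mathcal{C}$ and $\mathcal{J}$. On that point, one small correction: the openness of $\mathcal{C}$ in $\partial W$ follows not from $\partial W\subseteq\overline{\Int C}$ but from the facts you state just before it, namely that $D$ is open and $\partial W\cap D\subseteq C$ (because $\partial W\subseteq C\cup J$ and $J\cap D=\varnothing$); the containment in $\overline{\Int C}$ is what you need for the separate claim that $\partial W\cap J\subseteq\Int C\cup C$, i.e., that $\mathcal{J}=\partial W\cap\Int C$. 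With that attribution fixed, every step checks out, including the final maximality argument showing $Q((\alpha,\beta))$ is a full component of $C\cap D$ because its endpoints lie on $J$ and hence outside $D$.
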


\begin{proof}
Let $p$ be a boundary point of $D$ that is exterior to $C$, and let $q$ be a boundary point of $D$ that belongs to the interior of $C$.  

Let $\gamma_1$, $\gamma_2$ denote the subarcs of the boundary of $D$ that join $p$ and $q$.  Let $\alpha$ be a crosscut of $D$ so that $\alpha \cap C$ consists of a single point; label this point $p'$.
Let $D_j$ denote the interior of $\alpha \cup \gamma_j$.  By Lemma \ref{lm:SIDES}, $D_1$ and $D_2$ are the sides of $\alpha$.

Now, for each $j \in \{1,2\}$, we construct a point $q_j$ in $C \cap D_j$ so that $p'$ is a boundary point of the connected component of $q_j$ in $C \cap D_j$.  Since $D$ is open, there is a positive number $\delta$ so that $D_\delta(p') \subseteq D$.  Let $C' = C \cap D_\delta(p')$.  Thus, $C'$ is a subarc of $C$.
Let $q \in C' - \{p'\}$.  Then, $q \not \in \alpha$ since $C \cap \alpha = \{p'\}$.  
So, $q \in D_1 \cup D_2$.  Without loss of generality, suppose $q \in D_1$.   Relabel $q$ as $q_1$.  Let $q_2$ be a point of $C'$ so that $p'$ is between $q_1$ and $q_2$ on $C'$.  Again, $q_2 \in D_1 \cup D_2$.  Since $D_1$ is the interior of a Jordan curve, and since the subarc of $C'$ from $q_1$ to $q_2$ crosses the boundary of $D_1$ exactly once, $q_2 \not \in D_1$.  
So, $q_2 \in D_2$.  

Let $E_j$ denote the connected component of $q_j$ in $C \cap D_j$.  By construction, $p'$ is a boundary point of $E_j$.  So, the other endpoint of $E_j$ must be in $\gamma_j$ since $C \cap \alpha = \{p'\}$.  Set $E = E_1 \cup E_2$.  Thus, $E$ is a connected component of $C \cap D$.  One endpoint of $E$ belongs to $\gamma_1$, and the other belongs to $\gamma_2$.  This proves the lemma.  
\end{proof}

\begin{proof}[Proof of Theorem \ref{thm:POLAR}]
By assumption, $\mathcal{A}$ separates two boundary points of $D$.  One of these points is interior to the inner circle of $\mathcal{A}$, and the other is exterior to the outer circle of $\mathcal{A}$.  Let $p$ denote a point that is exterior to the outer circle of $\mathcal{A}$, and let $q$ denote a point that is interior to the inner circle of $\mathcal{A}$.  

Let $C$ be an intermediate circle of $\mathcal{A}$.  Then, $p$ is exterior to $C$ and $q$ is interior to $C$.  So, by Lemma \ref{lm:CUTS}, there is a connected component of $C \cap D$ so that one of its endpoints lies on $\gamma_1$ and the other lies on $\gamma_2$.  Thus, $(\gamma_1 \cap \mathcal{A}, \gamma_2 \cap \mathcal{A})$ is a polar separation of the boundary of $D \cap \mathcal{A}$.
\end{proof}

\section{Proof of Theorems \ref{thm:MODULUS.ESTIMATE} and \ref{thm:DIAM.EST}}\label{sec:ESTIMATES}

When $X, Y \subseteq \C$, let $\dinf(X,Y)$ denote the infimum of $|z - w|$ as $z$ ranges over all points of $X$ and $w$ ranges over all points of $Y$.

The proof of the following is essentially the same as the proof of Lemma 4.1 of \cite{McNicholl.2014} which is a standard length-area argument.

\begin{lemma}\label{lm:POLAR.SEP}
Let $\mathcal{A}$ be an annulus, and let $\Omega$ be an open subset of $\mathcal{A}$.  Suppose $(E,F)$ is a polar separation of the boundary of $\Omega$.  Then,
\[
\lambda(\mathcal{A}) \geq \sup_\phi \frac{\dinf(\phi[E], \phi[F])^2}{\Area(\phi[\Omega])}
\]
where $\phi$ ranges over all maps that are conformal on a neighborhood of $\overline{\Omega}$.
\end{lemma}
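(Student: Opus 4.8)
The plan is to prove Lemma~\ref{lm:POLAR.SEP} by a standard length--area (extremal length) argument, transported through the conformal map $\phi$. Fix a map $\phi$ that is conformal on a neighborhood of $\overline{\Omega}$; it suffices to show $\lambda(\mathcal{A}) \geq \dinf(\phi[E],\phi[F])^2 / \Area(\phi[\Omega])$, since $\phi$ is arbitrary. Write $\mathcal{A}$ as the annulus $\{r < |z| < R\}$ after a translation placing its center at the origin, so $\lambda(\mathcal{A}) = 2\pi / \log(R/r)$. The family $\Gamma$ of separating curves of $\mathcal{A}$ has extremal length $\lambda(\mathcal{A})$, and the intermediate circles $C_\rho = \{|z| = \rho\}$ for $r < \rho < R$ form a subfamily; for the extremal-length \emph{lower} bound we only need to test against these circles, exploiting the polar-separation hypothesis.

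The core computation proceeds as follows. First I would set $d = \dinf(\phi[E],\phi[F])$ and assume $d > 0$ (otherwise the inequality is trivial). For each $\rho \in (r,R)$, the hypothesis that $(E,F)$ is a polar separation gives a connected component $\beta_\rho$ of $C_\rho \cap \Omega$ whose closure meets both $E$ and $F$; hence $\phi[\beta_\rho]$ is a connected subset of $\phi[\Omega]$ whose closure meets both $\phi[E]$ and $\phi[F]$, so its Euclidean length satisfies $\operatorname{length}(\phi[\beta_\rho]) \geq d$. Parametrizing $C_\rho$ by angle $\theta$ and using that $\phi$ is holomorphic, the length of $\phi[\beta_\rho]$ is $\int_{\beta_\rho} |\phi'(z)|\, \rho\, d\theta$, so by Cauchy--Schwarz,
\[
d^2 \leq \left( \int_{\beta_\rho} |\phi'(z)| \rho\, d\theta \right)^2 \leq 2\pi \rho^2 \int_0^{2\pi} |\phi'(\rho e^{i\theta})|^2 \chi_\Omega(\rho e^{i\theta})\, d\theta,
\]
where $\chi_\Omega$ is the indicator of $\Omega$ (we may integrate over the full circle since the integrand vanishes off $\Omega$, and $\beta_\rho \subseteq \Omega$). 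Dividing by $2\pi\rho$ and integrating $d\rho/\rho \cdot \rho = d\rho$... more precisely, dividing by $2\pi\rho^2$ and then integrating against $\rho\, d\rho$ from $r$ to $R$:
\[
\frac{d^2}{2\pi} \log(R/r) = \frac{d^2}{2\pi}\int_r^R \frac{d\rho}{\rho} \leq \int_r^R \int_0^{2\pi} |\phi'(\rho e^{i\theta})|^2 \chi_\Omega\, \rho\, d\theta\, d\rho = \Area(\phi[\Omega]),
\]
the last equality being the change-of-variables / Jacobian formula $|\phi'|^2$ for a holomorphic map (valid because $\phi$ is conformal hence injective on a neighborhood of $\overline\Omega$, so there is no multiplicity issue). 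Rearranging gives $d^2 / \Area(\phi[\Omega]) \leq 2\pi / \log(R/r) = \lambda(\mathcal{A})$, as desired.

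I expect the main obstacle to be the measurability and regularity bookkeeping needed to make the integral manipulations rigorous: one must know that for a.e.\ $\rho$ the set $C_\rho \cap \Omega$ is a countable union of open arcs so that picking the component $\beta_\rho$ and bounding its length makes sense, that $\rho \mapsto \int_{\beta_\rho}|\phi'|\rho\,d\theta$ is measurable (or at least that the weaker inequality obtained by replacing $\beta_\rho$ with the whole slice $C_\rho \cap \Omega$ still suffices — it does, since lengthening the path only increases the integral), and that Fubini applies to $|\phi'|^2 \chi_\Omega$, which is fine because $\phi'$ is continuous on the compact set $\overline\Omega$ and $\chi_\Omega$ is Borel. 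Since the excerpt explicitly says this is ``essentially the same as the proof of Lemma~4.1 of \cite{McNicholl.2014}'' and ``a standard length-area argument,'' I would keep these points brief, citing the standard extremal-length reference \cite{Garnett.Marshall.2005} for the slicing and Fubini technicalities rather than reproving them, and present the Cauchy--Schwarz / integration chain above as the substantive content.
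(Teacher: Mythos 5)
Your proof is correct and is precisely the standard length--area argument that the paper itself invokes for this lemma (the paper omits the proof, deferring to Lemma 4.1 of \cite{McNicholl.2014}). The Cauchy--Schwarz and Fubini chain, the use of injectivity of $\phi$ to identify $\int_\Omega |\phi'|^2\,dA$ with $\Area(\phi[\Omega])$, and the remark that one may integrate over the whole slice $C_\rho\cap\Omega$ to avoid a measurable selection of the component $\beta_\rho$ are all exactly as expected.
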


\begin{proof}[Proof of Theorem \ref{thm:MODULUS.ESTIMATE}]
Note that $r < 1$ since $C$ is non-empty.  

We begin by constructing a rectangle $R$ as follows.   Let $z_0$ be any point of $\phi[C]$.  Choose $m, l_0$ so that $l_0 > l$, $m > \sqrt{\pi \lambda(\mathcal{A})}$, and $(1 - l_0)^2 + m^2 < ( 1 - l)^2 + \pi \lambda(\mathcal{A})$.   Since $r^2 = ( 1 - l)^2 + \pi \lambda(\mathcal{A})$, $z$ is exterior to the outer circle of $\mathcal{A}$ whenever $|\phi(z)| \leq \sqrt{(1 - l_0)^2 + m^2}$.  Let:
\begin{eqnarray*}
\nu_1 & = & \frac{z_0}{|z_0|} (1 - l_0 + m i)\\
\nu_2 & = & \frac{z_0}{|z_0|} (1 - l_0 - m i)\\
\end{eqnarray*}
Thus, the radius $[0, z_0/|z_0|]$ is a perpendicular bisector of the line segment $[\nu_1, \nu_2]$.  The midpoint of $[\nu_1, \nu_2]$ is $(1 - l_0) z_0/ |z_0|$, and the length of $[\nu_1, \nu_2]$ is $2m$.   Let:
\begin{eqnarray*}
\nu_3 & = & \frac{z_0}{|z_0|} (1 + m i )\\
\nu_4 & = & \frac{z_0}{|z_0|} (1 - m i )
\end{eqnarray*}
Thus, the line segment $[\nu_3, \nu_4]$ is perpendicular to the radius $[0, z_0/|z_0|]$.  Furthermore, the length of this segment is $2m$ and its midpoint is $z_0/ |z_0|$.

Let $R$ be the open rectangle whose vertices are $\nu_1$, $\nu_2$, $\nu_3$, and $\nu_4$.  That is, $R$ is the interior of $[\nu_1, \nu_3] \cup [\nu_3, \nu_4] \cup [\nu_4, \nu_2] \cup [\nu_2, \nu_1]$.  

Note that the diameter of $R$ is $\sqrt{l_0^2 + 4 m^2}$.  
Also, the diameter of $R$ approaches $\sqrt{l^2 + 4 \pi \lambda(\mathcal{A}) }$ as $(l_0,m) \rightarrow (l, \sqrt{\pi \lambda(\mathcal{A})})$.  It thus suffices to show that $\phi[C] \subseteq R$.  

We claim that it suffices to show that $\phi[C]$ contains no boundary point of $R$.  
For, since $\phi^{-1}(z_0)$ is interior to the outer circle of $\mathcal{A}$, the modulus of $z_0$ is larger than $\sqrt{(1 - l_0)^2 + m^2}$ which is larger than $1 - l_0$.  This implies that $z_0 \in R$.  
Since $R$ contains at least one point of $\phi[C]$, namely $z_0$, and since $\phi[C]$ is connected, it suffices to show that $\phi[C]$ contains no boundary point of $R$.  

Since $[\nu_3, \nu_4]$ contains no point of the unit disk, it contains no point of $\phi[C]$.  By construction, $|\nu_1|= |\nu_2| = \sqrt{(1 - l_0)^2 + m^2}$.  Thus, $|z| \leq \sqrt{(1 - l_0)^2 + m^2}$ whenever $z \in [\nu_1, \nu_2]$.  It follows from what has been observed about $l_0$ and $m$ that $[\nu_1, \nu_2]$ contains no point of $\phi[C]$.  So, it suffices to show that $[\nu_1, \nu_3] \cup [\nu_4, \nu_2]$ contains no point of $\phi[C]$.  

Let us begin by showing that $[\nu_1, \nu_3]$ contains no point of $\phi[C]$.  By way of contradiction, suppose otherwise.  
In order to obtain a contradiction, we construct a Jordan curve $J$ 
so that $\mathcal{A}$ separates two points of $J$ as follows.  Let $z_1$ be a point of $\phi[C]$ that belongs to $[\nu_1, \nu_3]$.  Thus, by what has just been observed, $z_1 \neq \nu_1$.  Let $\sigma_0$ be the pre-image of $\phi$ on $[\nu_1, 0]$.  Let $\sigma_1'$ be the pre-image of $\phi$ on $[\nu_1, z_1]$.  Let $\sigma_3'$ be the pre-image of $\phi$ on $[0, z_0]$.  Since $C$ is connected, it includes an arc from $\phi^{-1}(z_1)$ to $\phi^{-1}(z_0)$; label this arc $\sigma_2'$.  Let $w_1$ be the first point on $\sigma_1'$ that belongs to $\sigma_2'$.  Let $w_2$ be the first point on $\sigma_3'$ that belongs to $\sigma_2'$.  Let $\sigma_1$ be the subarc of $\sigma_1'$ from $\phi^{-1}(\nu_1)$ to $w_1$, and let $\sigma_3$ be the subarc of $\sigma_3'$ from $w_2$ to $\phi^{-1}(0)$.  Let $\sigma_2$ be the subarc of $\sigma_2'$ from $w_1$ to $w_2$.  Let $J = \sigma_0 \cup \sigma_1 \cup \sigma_2 \cup \sigma_3$.  Thus, $J$ is a Jordan curve.  By construction, $\mathcal{A}$ separates two points of $J$.

Let $D'$ denote the interior of $J$.  Let $\Omega = D' \cap \mathcal{A}$.  
Let $E = \sigma_1 \cap \mathcal{A}$, and let $F = \sigma_3 \cap \mathcal{A}$.  We claim that $(E,F)$ is a polar separation of the boundary of $\Omega$.  
For, let $p = \phi^{-1}(\nu_1)$, and let $q = w_1$ (where $w_1$ is as in the construction of $J$).  Thus, $p$ is exterior to the outer circle of $\mathcal{A}$.  Since $q \in C$, $q$ is interior to the inner circle of $\mathcal{A}$.  Let $\gamma_1 = \sigma_1$, and let $\gamma_2 = \sigma_2 \cup \sigma_3 \cup \sigma_0$.  Therefore, $\gamma_1$, $\gamma_2$ are the subarcs of the boundary of $D'$ that join $p$ and $q$.  So, by 
Theorem \ref{thm:POLAR}, 
$(\gamma_1 \cap \mathcal{A}, \gamma_2 \cap \mathcal{A})$ is a polar separation of the boundary of $\Omega$.  
Since $\sigma_0$ is the pre-image of $\phi$ on $[\nu_1, 0]$, $\sigma_0$ contains no point of $\overline{\mathcal{A}}$.  
Since $\sigma_2 \subseteq C$, $\sigma_2$ contains no point of $\overline{\mathcal{A}}$.  
Thus, $E = \gamma_1 \cap \mathcal{A}$, and $F = \gamma_2 \cap \mathcal{A}$.  Hence, $(E,F)$ is a polar separation of the boundary of $\Omega$.   

By construction, $\dinf(\phi[E], \phi[F]) = m$.  So, by Lemma \ref{lm:POLAR.SEP}, the area of $\phi[\Omega]$ is at least as large as 
\[
m^2 \lambda(\mathcal{A})^{-1} > \pi.
\]
This is impossible since the unit disk includes $\phi[\Omega]$.  Thus, $[\nu_1, \nu_3]$ contains no point of $\phi[C]$.  

By similar reasoning, $[\nu_4, \nu_2]$ contains no point of $\phi[C]$.  Thus, $\phi[C] \subseteq R$, and the theorem is proven.
\end{proof}

\begin{proof}[Proof of Theorem \ref{thm:DIAM.EST}]
Suppose $r_0$ is a positive number that is smaller than (\ref{eqn:DIAM.EST}).  We begin by defining an annulus $\mathcal{A}$ as follows.  Choose $l$ so that $0 < l < \epsilon$ and so that 
\[
r_0 < \exp\left( \frac{8\pi^2}{l^2 - \epsilon^2} \right) \min\left\{|\zeta_0 - \phi^{-1}(w)|\ :\ |w| \leq \sqrt{(1 - l)^2 + \frac{ \epsilon^2 - l^2}{4}}\right\}.
\]
There is a positive number $r_1$ so that 
\[
r_1 < \min\left\{ |\zeta_0 - \phi^{-1}(w)|\ :\ |w| \leq \sqrt{(1 - l)^2 + \frac{1}{4}(\epsilon^2 - l^2)}\right\}
\]
and so that 
\[
r_0 < \exp\left( \frac{8 \pi^2}{l^2 - \epsilon^2} \right) r_1.
\]
Since $l < \epsilon$, $r_0 < r_1$.  So, define $\mathcal{A}$ to be the annulus whose center is $\zeta_0$, whose outer radius is $r_1$, and whose inner radius is $r_0$.  

We now show that the diameter of $\phi[C(D; \zeta_0, r_0)]$ is smaller than $\epsilon$.  First, note that $\pi \lambda(\mathcal{A}) < (\epsilon^2 - l^2)/4$.  Set
$r = \sqrt{(l -1)^2 + \pi \lambda(\mathcal{A})}$.  It follows that $|\zeta_0 - z| > r_1$ whenever $|\phi(z)| \leq r$.  For, if $|\phi(z)| \leq r$, then $|\phi(z)| < \sqrt{(l - 1)^2 + (\epsilon^2 - l^2) / 4}$ and so $r_1 < |\zeta_0 - z|$ by the choice of $r_1$.  This means that $\mathcal{A}$ separates its center from $\phi^{-1}[\overline{D_r(0)}]$.  
By Theorem \ref{thm:MODULUS.ESTIMATE}, the diameter of $\phi[C(D; \zeta_0, r_0)]$ is at most 
\[
\sqrt{l^2 + 4\pi\lambda(\mathcal{A}) }.
\]
We have
\begin{eqnarray*}
l^2 + 4\pi\lambda(\mathcal{A}) & = & l^2 + \frac{8 \pi^2}{\log(r_1 / r_0)}\\
& < & l^2 + \epsilon^2 - l^2 = \epsilon^2.
\end{eqnarray*}
Thus, the diameter of $\phi[C(D; \zeta_0, r_0)]$ is smaller than $\epsilon$.
\end{proof}

\section{Proof of the Carath\'eodory Theorem}\label{sec:CT}

We now conclude with the proof of Theorem \ref{thm:CT}.  Set $r_0 = 2^{-k} + 2^{-g(k)}$.  By Theorem \ref{thm:MLC}, $z_0 \in C(D; \zeta_0, r_0)$.  By Theorem \ref{thm:DIAM.EST}, $|\phi(z_0) - \phi(\zeta_0)| < \epsilon$.  Thus, $\lim_{z \rightarrow \zeta_0} \phi(z) = \phi(\zeta_0)$.

We now show that this extension of $\phi$ is injective.  It suffices to show that $\phi(\zeta_0) \neq \phi(\zeta_1)$ whenever $\zeta_0$ and $\zeta_1$ are distinct boundary points of $D$.  By way of contradiction, suppose $\phi(\zeta_0) = \phi(\zeta_1)$.  Let $p = \phi(\zeta_0)$.  

We construct a Jordan curve $\sigma$ as follows.  Let $\alpha$ be a crosscut of $D$ that joins $\zeta_0$ and $\zeta_1$.  Thus, $\phi[\alpha]$ is a Jordan curve that contains no unimodular point other than $p$.  Let $\sigma = \phi[\alpha]$.  

We now construct an annulus $\mathcal{A}$ that separates two points of $\sigma$.  Fix a positive number $R$ so that 
$R < \max\{|z - p|\ :\ z \in \sigma\}$.  Choose another positive number $r$ so that $r < R$.  Let $\mathcal{A}$ be the annulus whose center is $p$, whose inner radius is $r$, and whose outer radius is $R$.  By the choice of $R$, there is a point $q \in \sigma$ that is exterior to the outer circle of $\mathcal{A}$.  Let $\gamma_1$ and $\gamma_2$ be the subarcs of $\sigma$ that join $p$ and $q$.  Let $E = \gamma_1 \cap \mathcal{A}$, and let $F = \gamma_2 \cap \mathcal{A}$.  
Finally, let $\Omega = \mathcal{A} \cap \D$ (where $\D$ is the unit disk).  Then, by Theorem \ref{thm:POLAR}, $(E,F)$ is a polar separation of the boundary of $\Omega$.  
Now, since $R$ is fixed, as $r \rightarrow 0^+$, $\lambda(\mathcal{A}) \rightarrow 0$.  However, by the choice of $R$, $\dinf(E,F)$ is bounded away from $0$ as $r \rightarrow 0^+$. Thus, by Lemma \ref{lm:POLAR.SEP} (applied to $\phi^{-1}$), $\Area(\phi^{-1}[\Omega]) \rightarrow \infty$ as $r \rightarrow 0^+$.  Since $\phi^{-1}[\Omega] \subseteq D$, this is a contradiction.  Thus, $\phi(\zeta_0) \neq \phi(\zeta_1)$.

Finally, we show that this extension of $\phi$ is surjective.  Let $\zeta$ be a point on the unit circle.  It follows from the Balzano-Weierstrauss Theorem that there is a boundary point of $D$, $\zeta_1$, so that $\zeta_1 \in \overline{\{ \phi^{-1}(r \zeta)\ :\ 0 < r < 1\}}$.  Thus, $\phi(\zeta_1) = \zeta$ by the continuity of $\phi$.

\section*{Acknowledgement}

I thank the referee for useful comments and Valentin Andreev for helpful conversation.


\begin{thebibliography}{1}

\bibitem{Couch.Daniel.McNicholl.2012}
P.J. Couch, B.D. Daniel, and T.H. McNicholl, \emph{Computing space-filling
  curves}, Theory of Computing Systems \textbf{50} (2012), no.~2, 370--386.

\bibitem{Daniel.McNicholl.2012}
D.~Daniel and T.H. McNicholl, \emph{Effective local connectivity properties},
  Theory of Computing Systems \textbf{50} (2012), no.~4, 621 -- 640.

\bibitem{Garnett.Marshall.2005}
J.~B. Garnett and D.~E. Marshall, \emph{Harmonic measure}, New Mathematical
  Monographs, vol.~2, Cambridge University Press, Cambridge, 2005.

\bibitem{Golusin.1969}
G.~M. Golusin, \emph{Geometric theory of functions of a complex variable},
  American Mathematical Society, 1969.

\bibitem{Greene.Krantz.2002}
R.~Greene and S.~Krantz, \emph{Function theory of one complex variable},
  Graduate Studies in Mathematics, American Mathematical Society, 2002.

\bibitem{Hocking.Young.1988}
John~G. Hocking and Gail~S. Young, \emph{Topology}, second ed., Dover
  Publications Inc., New York, 1988.

\bibitem{McNicholl.2014}
T.H. McNicholl, \emph{Computing boundary extensions of conformal maps}, To
  appear in London Mathematical Society Journal of Computational Mathematics.

\bibitem{McNicholl.2013.MLQ}
\bysame, \emph{Computing links and accessing arcs}, Mathematical Logic
  Quarterly \textbf{59} (2013), no.~1 - 2, 101 -- 107.

\bibitem{Palka.1991}
Bruce~P. Palka, \emph{An introduction to complex function theory},
  Undergraduate Texts in Mathematics, Springer-Verlag, New York, 1991.

\end{thebibliography}
\def\cprime{$'$}
\providecommand{\bysame}{\leavevmode\hbox to3em{\hrulefill}\thinspace}
\providecommand{\MR}{\relax\ifhmode\unskip\space\fi MR }
\providecommand{\MRhref}[2]{%
  \href{http://www.ams.org/mathscinet-getitem?mr=#1}{#2}
}
\providecommand{\href}[2]{#2}

\end{document}